\pgfplotsset{%cycle list name=black white,
	cycle list name=exotic,
	%axis x line*=bottom,
	%axis y line*=left,
	xlabel near ticks,
	ylabel near ticks,
	%clip mode=individual,
	%/pgf/number format/use comma,
	width= .5\textwidth,
	height=4.5cm,
	%every axis plot/.append style={line join=round},
	tick label style={font=\scriptsize},
	xlabel style={font=\scriptsize},
	legend style={font=\scriptsize},
	}
\pgfplotsset{compat=1.13}
\algnewcommand\algorithmicinput{\textbf{Input:}}
\algnewcommand\Input{\item[\algorithmicinput]}
\algnewcommand\algorithmicoutput{\textbf{Output:}}
\algnewcommand\Output{\item[\algorithmicoutput]}
\DeclareMathOperator{\diag}{diag}
\DeclareMathOperator{\spa}{span}
\DeclareMathOperator{\Real}{Re}
\DeclareMathOperator{\Imag}{Im}
\DeclareMathOperator{\im}{im}
\DeclareMathOperator{\chol}{chol}
\newcommand{\p}[1]{\mathfrak{ #1}}
\newcommand{\B}{\bm{b}}
\newcommand{\customqed}{\qed}
\theoremstyle{plain}
\newtheorem{theorem}{Theorem}[section]
\newtheorem{lemma}{Lemma}[section]
\theoremstyle{remark}
\newtheorem{remark}{Remark}[section]
\theoremstyle{definition}
\newtheorem{definition}{Definition}[section]
\renewcommand{\customqed}{}
\newenvironment{acknowledgements}[0]{ \par \mbox{} \par \noindent \small \textbf{Acknowledgements}}{}
\begin{document}

\title{Riccati ADI: Existence, uniqueness and new iterative methods
}
\author{Christian Bertram\textsuperscript{1} and Heike Faßbender\textsuperscript{1}}
\date{\textsuperscript{1}Institute for Numerical Analysis, Technische Universität Braunschweig, Germany}

\maketitle

\begin{abstract}
	The approximate solution of large-scale algebraic Riccati equations is considered. We are interested in approximate solutions which yield a Riccati residual matrix of a particular small rank. It is assumed that such approximate solutions can be written in factored form $ZYZ^*$ with a rectangular matrix $Z$ and a small quadratic matrix $Y$. We propose to choose $Z$ such that its columns span a certain rational Krylov subspace. Conditions under which such an approximate solution exists and is unique are determined. It is shown that the proposed method can be interpreted as an oblique projection method. Two new iterative procedures with efficient updates of the solution and the residual factor are derived. With our approach complex system matrices can be handled, realification is provided and parallelization is introduced.

\end{abstract}

\section{Introduction}
Consider the continuous-time algebraic Riccati equation
\begin{align}
	\begin{split}
	\label{def:are}
	0 = \mathcal{R}(X) &\coloneqq A^{*}X + XA + C^*C - XBB^{*}X\\
	&= (A^*-XBB^*)X + X(A-BB^*X) + C^*C + XBB^*X
	\end{split}
\end{align}
with complex system matrices $A\in \mathbb{C}^{n\times n}$ large and sparse and $B\in \mathbb{C}^{n\times m}$, $C\in \mathbb{C}^{p\times n}$. This Riccati equation occurs in several applications such as linear-quadratic regulator problems of optimal control, in $\mathcal{H}_2$ and $\mathcal{H}_\infty$ control, in balancing-related model order reduction or in Kalman filtering. Due to the quadratic term there may be many solutions of \eqref{def:are}, but in most applications one is interested in the unique stabilizing solution. This is the positive semidefinite solution $X$ for which $A-BB^*X$ is stable, i.e. all its eigenvalues are contained in the left half of the complex plane. Existence of this solution is guaranteed if $(A,B)$ is stabilizable and $(C,A)$ is detectable \cite[Thm.2.21]{bini11}, \cite[Thm. 9.1.2]{lancaster95}. If $(C,A)$ is observable, then the sought solution is even positive definite \cite[Thm. 9.1.5]{lancaster95}.

We are concerned with the large-scale setting where direct methods for the solution of \eqref{def:are} are infeasible. In this setting, one is interested in a Hermitian low-rank approximation of the form $\tilde X = ZYZ^{*}\approx X$ where $Z$ is a rectangular matrix with only few columns and $Y$ is a small square Hermitian matrix. This form of approximation gives two degrees of freedom: The approximation space, that is, the space spanned by the columns of $Z$, and the choice of the approximate solution in this space which is determined by $Y$. There are several methods which produce such a low-rank approximation; see, e.g. \cite{benner20_compare} for an overview. Basically, there are three families of methods, described briefly next. All these methods use certain (rational) Krylov subspaces as approximation spaces.

In projection methods, such as the extended \cite{heyouni09} and rational Krylov subspace method \cite{Simoncini2014,Simoncini2016} (denoted \emph{*KSM}), the factor $Z$ is chosen such that its columns are an orthonormal basis for a certain (rational) Krylov subspace. The matrix $Y$ is chosen as the solution of an orthogonal projection of the Riccati equation \eqref{def:are} onto the selected Krylov subspace.

Low-rank Newton-Kleinman methods treat \eqref{def:are} as a nonlinear equation and apply Newton's method, so in each iteration step a linear matrix equation has to be solved, see e.g. \cite{benner16_newtonADI} and the references therein. Recently, the projected Newton-Kleinman method was derived in \cite{palitta19_arxiv} which is a projection scheme where the inner linear solves are performed only implicitly, making this approach competitive to the other methods mentioned here.

The third family are the ADI-type methods which are investigated in this work. They are well known from linear matrix equations and derived from the alternating directions implicit method \cite{wachspress88,li2002,kuerschner16_phd}. In order to be able to apply these methods to large-scale problems, the variant of the ADI iteration for dense matrices is reformulated to obtain a low-rank formulation \cite{li2002}. However, due to the nonlinearity of the Riccati equations, at first glimpse, this technique seems to be infeasible here. Though there are ADI-type methods for the Riccati equation, e.g. the qADI algorithm \cite{wong06,wong07}, the algorithm of Amodei and Buchot \cite{amodei10}, the RADI iteration \cite{benner18} and the Cayley subspace iteration \cite{simoncini15}. In fact all these methods produce (at least theoretically) the same approximate solution, which was shown laboriously e.g. in \cite{benbu16,benner18}. The term \emph{Riccati ADI methods} (proposed in \cite{benner18}) will be used throughout this work to denote the ADI-type methods for Riccati equations. In contrast to the other two families the approximation space is not chosen directly but turns out to be a rational Krylov subspace which is determined by shift parameters, see \cite[Prop. 2]{benner18}. Besides, the residual matrix is of rank $p$, see e.g. \cite[Prop. 1]{benner18}.

As discussed in \cite[Sec. 5]{benner18}, the RADI iteration is computationally more efficient than any of the other Riccati ADI methods considered. This method has been formulated for the case of real system matrices $A,$ $B$ and $C$ only. It can handle generalized Riccati equations and provides a low-rank formulation $ZZ^{*}$ for the approximate solution of \eqref{def:are}.
The method offers a shift strategy for fast convergence and techniques to reduce the use of complex arithmetic in case of complex shift parameters. In each iteration step the Sherman-Morrison-Woodbury (SMW) formula \cite[Chp. 2.1.4]{golub13} is used to solve the occurring linear system, which becomes expensive in case of a system matrix $B$ with many columns, i.e. with large $m$.

Our ADI-type approach is similar to the approach of the projection methods. We decouple the calculation of a basis $Z$ for a certain (rational) Krylov subspace from the calculation of $Y$. That is, we choose $Z$ and consider the approximate solution $\tilde X=ZYZ^{*}$. The matrix $Y$ is determined such that the \emph{Riccati residual}
\begin{align}
	\mathcal{R}(\tilde X) = A^{*}\tilde X + \tilde XA + C^*C - \tilde XBB^{*}\tilde X
\end{align}
is of rank $p$. Due to its symmetry and its rank it can be factored into $\mathcal{R}(\tilde X)=\tilde R\tilde R^*$ with the \emph{Riccati residual factor} $\tilde R\in \mathbb{C}^{n\times p}$. We call such an approximate solution $\tilde X=ZYZ^{*}$ with a rank $p$ residual an \emph{ADI approximate solution}.

We answer the following questions: For which (rational) Krylov subspaces with basis $Z$ does a small matrix $Y$ exist such that the Riccati residual $\mathcal{R}(ZYZ^*)$ can be factored into rank-$p$ matrices? Is this approximate solution unique? How can it be obtained efficiently?

We show that the ADI approximate solution exists and is unique if the approximation space is a rational Krylov subspace whose poles satisfy a simple condition. Generalized Riccati equations can be handled.
It is demonstrated that our Riccati ADI method can be interpreted as an oblique projection method.
The projected system matrices are connected to the poles and zeros of a rational function corresponding to the residual factor. We derive two efficient iterative methods, which, due to our uniqueness result, generate the same approximate solution to \eqref{def:are} as the other Riccati ADI methods. When all system matrices are real valued and all poles of the rational Krylov subspace appear in complex conjugated pairs, then we can choose a real basis $Z$ and all involved quantities remain real valued. Our approach allows to parallelize linear systems solves which are necessary to generate the Krylov basis $Z$.

To answer the above questions and derive the new iterative methods we make use of rational Arnoldi decompositions which are introduced in \Cref{sec:rad}. In \Cref{sec:approx_riccati} conditions for the existence and uniqueness of the ADI approximate solution are derived and the connection to projection methods is presented. New iterative methods based on these theoretical findings are derived in \Cref{sec:iterexpa}. Further, realification and parallelization are introduced here. In \Cref{sec:discussion} we indicate how to deal with generalized Riccati equations, show how our method simplifies in case of linear matrix equations and mention a shift selection strategy. In the numerical experiments in \Cref{sec:numexp} we compare our new iterative methods and demonstrate the effects of parallelization. Concluding remarks are given in \Cref{sec:conclusion}.

\subsection{Notation}
An underscore is used to indicate matrices with more rows than columns. With a negative index we denote the quadratic lower submatrix $\underline{K_{-j}}=\begin{bmatrix} 0 & I\end{bmatrix}\underline{K_j}$ of such a rectangular matrix. A (block) diagonal matrix with diagonal entries $A_1, \dots, A_l$ is indicated by $\diag(A_1, \dots, A_l)$. The set of eigenvalues of a square matrix $A$ is given by $\Lambda(A)$. The matrix $A^*$ is the complex conjugated of the transposed matrix $A^{\mathsf{T}}$. We use this notation also for real matrices. A linear space $\mathcal{V}$ is called \emph{$A$-variant} if $A\mathcal{V}\not\subseteq \mathcal{V}$ and \emph{$A$-invariant} otherwise. With $\mathbb{C}_{+}$ ($\mathbb{C}_{-}$) we indicate the set of complex numbers with positive (negative) real part. For $j\in \mathbb{N}_0$ a polynomial $\p{p}$ is given via $\p{p}(x)=\sum_{i=0}^{j}a_ix^{i}$ with coefficients $a_i\in \mathbb{C}$ for $i=0, \dots, j$. The degree $\deg(\p{p})$ of the polynomial $\p{p}$ is the largest index belonging to a nonzero coefficient. If all coefficients are zero, then the degree is set to $-\infty$. The set of all polynomials with degree at most $j$ is denoted by $\Pi_j$. The $i$-th canonical standard basis vector is denoted $e_i$.

\section{Rational Krylov subspaces and rational Arnoldi decompositions}\label{sec:rad}
We start with a discussion on (rational) Krylov subspaces. The definitions and theorems in this section are largely based on the work of Berljafa \cite{berljafa15,berljafa17}.
Given a matrix $A\in \mathbb{C}^{n\times n}$, a vector $b\in \mathbb{C}^{n}$ and a number $j\in \mathbb{N}$, the \emph{polynomial Krylov subspace of order $j$} is defined by
\begin{align}\label{eq:polkry}
	\mathcal{K}_j(A,b)&\coloneqq\spa\!\left\{ b, Ab, \dots, A^{j-1}b \right\} \\
	&= \{ \p{p}(A)b \mid \p{p}\in \Pi_{j-1} \},
\end{align}
the set of all polynomials of degree at most $j-1$ in $A$ multiplied with $b$. We extend this definition to rational functions instead of only polynomials. Let $\p{q}\in \Pi_j$ be a fixed polynomial of degree at most $j$ with no roots in $\Lambda(A)$ so $\p{q}(A)$ is invertible. The \emph{rational Krylov subspace of order $j$ associated to $\p{q}$} is defined by
\begin{align}\label{eq:ratkry}
	\mathcal{K}_j(A,b,\p{q})&\coloneqq \p{q}(A)^{-1}\mathcal{K}_j(A,b)\\
	&= \{ \p{q}(A)^{-1}\p{p}(A)b \mid \p{p}\in \Pi_{j-1} \},
\end{align}
the set of all rational functions in $A$ multiplied with $b$ with a nominator polynomial of degree at most $j-1$ and fixed denominator polynomial $\p{q}$ of degree equal or less than the order $j$ of the Krylov subspace $\mathcal{K}_j(A,b)$. The polynomial $\p{q}$ has roots $s_1, \dots, s_{\deg(\p{q})}\in \mathbb{C}\setminus \Lambda(A)$. If $\deg(\p{q})<j$ we say it has $j-\deg(\p{q})$ formal roots at infinity. This allows us to characterize the rational Krylov subspace through a set of $j$ roots $s=\left\{ s_1, \dots, s_{j} \right\}\subset \mathbb{C}\cup \{\infty\} \setminus \Lambda(A)$ instead of the polynomial $\p{q}$, i.e. we can write $\mathcal{K}_j(A,b,s)$ for \eqref{eq:ratkry}. The roots of $\p{q}$ are also called \emph{shifts} or \emph{poles} of the rational Krylov subspace.

The polynomial Krylov subspace \eqref{eq:polkry} of order $j$ is a special case of a rational Krylov subspace with $\p{q}=1$, i.e. a rational Krylov subspace of order $j$ with $j$ poles at infinity. On the other extreme, if $\p{q}$ has degree $j$, then $\p{q}^{-1}\p{p}$ is a proper rational function as the degree of $\p{p}$ is at most $j-1$ and so $b\not\in\mathcal{K}_j(A,b,\p{q})$ holds. In between these two cases we may apply polynomial long division to decompose the rational Krylov subspace. Let $\p{p}\in \Pi_{j-1}$, $\p{q}\in \Pi_{j}$, and set $l=j-\deg(\p{q})$, the number of poles at infinity of a rational Krylov subspace of order $j$ associated to $\p{q}$. We find $\p{u}\in \Pi_{l-1}$ and $\p{r}\in\Pi_{\deg(\p{q})-1}$ with $\p{p}=\p{q}\p{u}+\p{r}$ which results in the decomposition
\begin{align}
	\mathcal{K}_j(A,b,\p{q})&= \{\p{u}(A)b + \p{q}(A)^{-1}\p{r}(A)b \mid \p{u}\in \Pi_{l-1},\, \p{r}\in \Pi_{\deg(\p{q})-1} \}\\
	&= \mathcal{K}_{l}(A,b) + \mathcal{K}_{\deg(\p{q})}(A,b,\p{q}) \label{eq:ratpolydec}
\end{align}
of a rational Krylov subspace into a polynomial Krylov subspace of order $l$ and a rational Krylov subspace of order $\deg(\p{q})$ without poles at infinity.

In particular it holds $\mathcal{K}_l(A,b)\subset \mathcal{K}_{j}(A,b,s)$ if $s$ contains the shift infinity at least $l$ times. More specifically $b$ is contained in a rational Krylov subspace if the shift infinity occurs at least once. We often omit the term \emph{rational} in the following.

Consider the general case of an $A$-variant Krylov subspace $\mathcal{K}_j(A,b,s)$, that is, $A\mathcal{K}_j(A,b,s)\not\subset\mathcal{K}_j(A,b,s)$. Then the spaces $\mathcal{K}_j(A,b,s)$ and $A\mathcal{K}_j(A,b,s)$ are of dimension $j$ and their intersection is of dimension $j-1$, i.e. $\mathcal{K}_j(A,b,s)$ is almost $A$-invariant. This implies that the sum of these two spaces is of dimension $j+1$. More specifically it holds
\begin{align}
	A\mathcal{K}_j(A,b,s) &= A\mathcal{K}_j(A,b,\p{q})\\
	&= \p{q}(A)^{-1}A\mathcal{K}_j(A,b)\\
	&\subset \p{q}(A)^{-1}\mathcal{K}_{j+1}(A,b)\\
	&= \mathcal{K}_{j+1}(A,b,\p{q})\\
	&= \mathcal{K}_{j+1}(A,b,s\cup\{\infty\}),\label{eq:augkry}
\end{align}
with one additional shift infinity in the last line because the order of the polynomial Krylov subspace was increased by one without altering $\p{q}$. We call the latter space with an additional infinite shift \emph{augmented Krylov subspace} and use the notation $\mathcal{K}_j^+(A,b,s)\coloneqq \mathcal{K}_{j+1}(A,b,s\cup\{\infty\})$. It contains rational functions in $A$ multiplied with $b$ whose nominator polynomials are of degree at most $j$ instead of only $j-1$ as in \eqref{eq:ratkry}.

Let $V_{j+1}$ be a basis of the augmented space $\mathcal{K}_j^+(A,b,s)$. As it contains both spaces $\mathcal{K}_j(A,b,s)$ and $A\mathcal{K}_j(A,b,s)$ it is possible to encode the effect of multiplication of $\mathcal{K}_j(A,b,s)$ with $A$ in a decomposition of the form
\begin{align}\label{eq:RAD}
	AV_{j+1}\underline{K_{j}} = V_{j+1}\underline{H_{j}}
\end{align}
with rectangular matrices $\underline{K_{j}},\, \underline{H_{j}}\in \mathbb{C}^{(j+1)\times j}$ and
\begin{align}\label{eq:VK}
	\spa\{V_{j+1}\underline{K_{j}}\}&=\mathcal{K}_j(A,b,s),\\
	\spa\{V_{j+1}\underline{H_{j}}\}&=A\mathcal{K}_j(A,b,s).
\end{align}
Decompositions of the form \eqref{eq:RAD}, their connections to Krylov subspaces and their properties play a key role in our discussion on Riccati equations. We therefore introduce some central definitions and results from \cite{berljafa15}.

\begin{remark}
	We note that our definition \eqref{eq:ratkry} of a rational Krylov subspace differs from the definition in \cite{berljafa15} in the order of the involved polynomial Krylov subspace. The rational Krylov subspaces defined in \cite{berljafa15} are equivalent to our augmented Krylov subspace \eqref{eq:augkry}. The reason for the deviation from the literature is that for our purposes Krylov subspaces with only finite shifts are needed, but the augmented Krylov subspaces always contain (at least) one infinite shift. See also \cite[Prop.~3.3]{berljafa17} and the paragraph preceding it.
\end{remark}

\begin{definition}[cf. {\cite[Def. 4.1]{berljafa15}}]
	Let $\underline{K_j},\underline{H_j}\in \mathbb{C}^{(j+1)\times j}$ be rectangular matrices. We say that the pencil $(\underline{H_j},\underline{K_j})$ is \emph{regular} if the lower $j\times j$ subpencil $(\underline{H_{-j}},\underline{K_{-j}})$ is regular, i.e., $\det(z\underline{K_{-j}}-\underline{H_{-j}})$ is not identically equal to zero.
\end{definition}

\begin{definition}[{\cite[Def. 4.2]{berljafa15}}]\label{def:RKD}
	A relation of the form \eqref{eq:RAD} where $V_{j+1}$ is of full column rank and $(\underline{H_{-j}},\underline{K_{-j}})$ is regular is called a \emph{generalized rational Krylov decomposition}. The generalized eigenvalues of $(\underline{H_{-j}},\underline{K_{-j}})$ are called \emph{poles of the decomposition}. If the poles of \eqref{eq:RAD} are outside the spectrum $\Lambda(A)$, then \eqref{eq:RAD} is called a \emph{rational Krylov decomposition (RKD)}.
\end{definition}

If in a (generalized) RKD the matrices $\underline{K_j}$ and $\underline{H_j}$ are upper Hessenberg matrices then the decomposition is called a \emph{(generalized) rational Arnoldi decomposition (RAD)}. The columns of $V_{j+1}$ are called the \emph{basis of the decomposition} and they span the \emph{augmented space of the decomposition} (cf. \cite[Def. 2.3]{berljafa15}). The first column of $V_{j+1}$ is called \emph{starting vector}. Every (generalized) RKD can be transformed to a (generalized) RAD with the same starting vector and the same poles using a generalized Schur form of $(\underline{H_{-j}},\underline{K_{-j}})$ (see \cite[Thm. 4.3]{berljafa15}). If $A$ and $b$ are real valued and the poles appear in complex conjugated pairs then a generalized real Schur form of $(\underline{H_{-j}},\underline{K_{-j}})$ can be used to obtain a \emph{quasi-RAD}, that is $\underline{K_{-j}}$ is a real upper triangular matrix and $\underline{H_{-j}}$ is a real quasi upper-triangular matrix with $1$-by-$1$ and $2$-by-$2$ blocks on the diagonal (cf. \cite[Def. 2.17]{berljafa17}). 
The following theorem guarantees the existence of a RAD for a Krylov subspace \eqref{eq:ratkry}.
\begin{theorem}[cf. {\cite[Thm. 2.5]{berljafa15}}]\label{thm:RADexists}
	Let $\mathcal{V}_{j+1}$ be a vector space of dimension $j+1$ and $s\in \mathbb{C}\cup \{\infty\}$ be a set with $j$ elements. Then $\mathcal{V}_{j+1}=\mathcal{K}_j^+(A,b,s)$ holds if and only if there exists a RAD $AV_{j+1}\underline{K_j}=V_{j+1}\underline{H_j}$ with $\underline{K_j},\,\underline{H_j}\in \mathbb{C}^{(j+1)\times j},$ starting vector $b=V_{j+1}e_1,$ poles $s$ and $\spa\{V_{j+1}\} = \mathcal{V}_{j+1}$.
\end{theorem}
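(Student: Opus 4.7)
The plan is to prove both implications by induction on the subspace order $j$, using the rational Arnoldi process to construct the RAD in one direction and using the upper Hessenberg structure of the pencil to characterise the basis vectors in the other. Throughout, I would rely on the decomposition \eqref{eq:ratpolydec} together with the identity $\mathcal{K}_j^+(A,b,s)=\mathcal{K}_{j+1}(A,b,s\cup\{\infty\})$ to switch between the rational and augmented pictures.

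For the direction $\mathcal{V}_{j+1}=\mathcal{K}_j^+(A,b,s)\Rightarrow\text{RAD exists}$, I would build $V_{j+1}$ column by column via a rational Arnoldi iteration. Set $v_1=b/\|b\|$ so that $V_{j+1}e_1$ is proportional to $b$. Given $v_1,\dots,v_k$ spanning $\mathcal{K}_{k-1}^+(A,b,\{s_1,\dots,s_{k-1}\})$, form
\[
 w =
 \begin{cases}
 Av_k, & s_k=\infty,\\
 (A-s_kI)^{-1}v_k, & s_k\in\mathbb{C},
 \end{cases}
\]
and orthonormalise $w$ against $v_1,\dots,v_k$ to obtain $v_{k+1}$; the coefficients used produce the new columns of $\underline{K_j}$ and $\underline{H_j}$ with upper Hessenberg structure. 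Using \eqref{eq:augkry} and \eqref{eq:ratpolydec} one checks that the span of the updated basis equals the augmented space with shifts $\{s_1,\dots,s_k\}$, and that the sub-diagonal entry corresponding to step $k$ encodes $s_k$ (finite or infinite) as the $k$-th generalised eigenvalue of $(\underline{H_{-k}},\underline{K_{-k}})$, so the lower subpencil is regular with the prescribed poles. If needed, the final basis can be completed so that $\spa\{V_{j+1}\}=\mathcal{V}_{j+1}$.

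For the converse, I would read off column $k$ of \eqref{eq:RAD}: by the Hessenberg shape,
\[
 \bigl(H_{k+1,k}I-K_{k+1,k}A\bigr)v_{k+1} \;=\; \sum_{i=1}^{k}\bigl(K_{ik}A-H_{ik}I\bigr)v_i.
\]
If $K_{k+1,k}\neq 0$ then, by regularity of $(\underline{H_{-j}},\underline{K_{-j}})$ and the definition of a RAD, the pole $s_k=H_{k+1,k}/K_{k+1,k}$ lies outside $\Lambda(A)$, so one may invert $(s_kI-A)$ and express $v_{k+1}$ as a rational function of $A$ applied to $v_1,\dots,v_k,Av_1,\dots,Av_k$; if $K_{k+1,k}=0$ then $s_k=\infty$ and regularity forces $H_{k+1,k}\neq 0$, so $v_{k+1}$ is a linear combination of $v_i$ and $Av_i$. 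In either case the inductive hypothesis places $v_{k+1}$ in $\mathcal{K}_k^+(A,b,\{s_1,\dots,s_k\})$. Since $b=V_{j+1}e_1\in\spa\{V_{j+1}\}$ and $V_{j+1}$ has full column rank $j+1$, a dimension count against $\dim\mathcal{K}_j^+(A,b,s)=j+1$ yields the equality $\mathcal{V}_{j+1}=\mathcal{K}_j^+(A,b,s)$.

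The main obstacle I anticipate is not the algebra of any single step but the careful bookkeeping that ties the sub-diagonal data $(H_{k+1,k},K_{k+1,k})$ to the $k$-th pole of the decomposition in a way that treats finite and infinite shifts uniformly; in particular, one must invoke the regularity of $(\underline{H_{-j}},\underline{K_{-j}})$ precisely to exclude the degenerate case $H_{k+1,k}=K_{k+1,k}=0$ and to guarantee that the generalised eigenvalues of the lower subpencil agree, as a multiset, with the prescribed set $s$ of poles.
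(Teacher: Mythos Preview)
The paper does not supply its own proof of this theorem: it is stated with the attribution ``cf.\ \cite[Thm.~2.5]{berljafa15}'' and used as a black box thereafter, so there is no proof in the paper to compare against. Your proposal follows what is essentially the standard argument for this result (and presumably close to what appears in the cited reference): the forward direction runs a rational Arnoldi process to manufacture the Hessenberg pencil column by column, and the reverse direction unwinds the Hessenberg recurrence to show each basis vector lies in the appropriate augmented rational Krylov space.

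Two small points are worth tightening. First, the theorem as stated requires $b=V_{j+1}e_1$ exactly, not up to scaling, so you should set $v_1=b$ rather than $v_1=b/\|b\|$; nothing in the definition of a RAD forces $V_{j+1}$ to be orthonormal. Second, in the reverse direction your identification of the $k$-th pole with the ratio $H_{k+1,k}/K_{k+1,k}$ implicitly assumes the lower subpencil is already upper triangular, so that its generalised eigenvalues are read off the subdiagonal; strictly speaking, \Cref{def:RKD} only demands that $(\underline{H_{-j}},\underline{K_{-j}})$ be regular, and the poles are defined as its generalised eigenvalues as a multiset. You should either note that the Hessenberg structure of $(\underline{H_j},\underline{K_j})$ makes $(\underline{H_{-j}},\underline{K_{-j}})$ upper triangular, so the subdiagonal ratios \emph{are} the generalised eigenvalues, or phrase the induction so that the pole multiset is recovered at the end rather than step by step. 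With those adjustments the argument is sound.
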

Let $\mathcal{K}_j(A,b,s)$ be a Krylov subspace with associated RKD
\begin{align}\label{eq:RKD}
	AV_{j+1}\underline{K_j}=V_{j+1}\underline{H_j}.
\end{align}
Such a RKD can be transformed by a regular matrix $U\in \mathbb{C}^{(j+1)\times (j+1)}$ into the (generalized) RKD
\begin{align}\label{eq:RADU}
	A\breve V_{j+1}\underline{\breve K_j}=\breve V_{j+1}\underline{\breve H_j}
\end{align}
with $\breve V_{j+1}=V_{j+1}U,$ $\underline{\breve K_j}=U^{-1}\underline{K_j}$ and $\underline{\breve H_j}=U^{-1}\underline{H_j}$. The RKD \eqref{eq:RADU} is associated to the same Krylov subspace $\mathcal{K}_j(A,b,s)$ as \eqref{eq:RKD}. In general the starting vector $\breve b = \breve V_{j+1}e_1 = V_{j+1}Ue_1$ and the poles $\Lambda(\underline{\breve H_{-j}}, \underline{\breve K_{-j}})$ are altered through this transformation and it may happen that the poles coincide with eigenvalues of $A$. Moreover, as $\breve b\in \mathcal{K}_{j}^+(A,b,s)$, there is a polynomial $\p{q}\in \Pi_j$ with roots $s$ and a polynomial $\breve{\p{q}}\in \Pi_j$ such that $\breve b= \p{q}(A)^{-1}\breve{\p{q}}(A)b$. Thus, if no root of $\breve{\p{q}}$ coincides with an eigenvalue of $A$ then $\breve{\p{q}}(A)^{-1}\p{q}(A)\breve b=b$ holds. This implies
\begin{align}
	\mathcal{K}_j(A,b,\p{q})&= \p{q}(A)^{-1}\mathcal{K}_j(A,b)\\
	&= \{ \p{q}(A)^{-1}\p{p}(A)b \mid \p{p}\in \Pi_{j-1} \}\\
	&= \{ \p{q}(A)^{-1}\p{p}(A)\breve{\p{q}}(A)^{-1}\p{q}(A)\breve b \mid \p{p}\in \Pi_{j-1} \}\\
	&= \{ \breve{\p{q}}(A)^{-1}\p{p}(A)\breve b \mid \p{p}\in \Pi_{j-1} \}\\
	&= \mathcal{K}_j(A,\breve b,\breve{\p{q}}),
\end{align}
where we have used the commutativity of rational functions in $A$. Even if a root of $\breve{\p{q}}$ coincides with an eigenvalue of $A$ the reverse result holds.
\begin{theorem}[{cf. \cite[Thm. 4.4]{berljafa15}}]\label{thm:rootfinding}
	Let $\mathcal{V}_{j+1}=\mathcal{K}_{j}^+(A,b,\p{q})$ be $A$-variant. Let $\breve{\p{q}}\in \Pi_j$ be a polynomial with roots equal to the poles of the (generalized) RKD $A\breve V_{j+1}\underline{\breve K_j}=\breve V_{j+1}\underline{\breve H_j}$. If $\breve V_{j+1}$ spans $\mathcal{V}_{j+1}$, then for the starting vector $\breve b=\breve V_{j+1}e_1$ it holds $\breve b=\gamma\p{q}(A)^{-1}\breve{\p{q}}(A)b$ with a scalar $0\neq \gamma\in \mathbb{C}$.
\end{theorem}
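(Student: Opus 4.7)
The plan is to reduce the claim to a purely polynomial identity in $\mathbb{C}[x]$ and then verify this identity using the Hessenberg-triangular structure of a rational Arnoldi decomposition.

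Since $\breve b \in \mathcal{V}_{j+1} = \p{q}(A)^{-1}\mathcal{K}_{j+1}(A,b)$, there exists a polynomial $\p{r} \in \Pi_j$ with $\breve b = \p{q}(A)^{-1}\p{r}(A)b$. Because $\p{q}(A)$ is invertible (its roots avoid $\Lambda(A)$) and the identity $\dim\mathcal{V}_{j+1} = j+1$ forces $\dim\mathcal{K}_{j+1}(A,b) = j+1$, the evaluation map $\p{p} \mapsto \p{p}(A)b$ from $\Pi_j$ into $\mathcal{K}_{j+1}(A,b)$ is a linear bijection; in particular $\p{r}$ is uniquely determined, and the sought identity $\breve b = \gamma\,\p{q}(A)^{-1}\breve{\p{q}}(A)b$ reduces to the purely polynomial statement $\p{r} = \gamma\breve{\p{q}}$ in $\mathbb{C}[x]$.

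To prove this polynomial identity I would first reduce to a RAD: by the discussion following \Cref{def:RKD}, any (generalized) RKD can be transformed by a generalized Schur decomposition of $(\underline{\breve H_{-j}},\underline{\breve K_{-j}})$ into a (generalized) RAD with the same starting vector and the same poles. In Hessenberg form, the lower submatrices are upper triangular, and the poles $\mu_1,\dots,\mu_j$ are exactly the diagonal ratios. Writing $v_i = \breve V_{j+1}e_i$, the $i$-th column of the RAD gives a recursion expressing $v_{i+1}$ as $(A-\mu_i I)^{-1}$ applied to a linear combination of $v_1,\dots,v_i$ and $Av_1,\dots,Av_i$ when $\mu_i$ is finite, and as such a linear combination directly when $\mu_i = \infty$. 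An induction on $i$ then shows that each $v_i$ can be written (after substituting $\breve b = \p{q}(A)^{-1}\p{r}(A)b$) in the form $v_i = \p{q}(A)^{-1}\breve{\p{q}}_{i-1}(A)^{-1}\sigma_i(A)\p{r}(A)b$, where $\breve{\p{q}}_{i-1}$ collects the finite poles $\mu_k$ with $k<i$. Specialising to $i=j+1$ and comparing with the unique representation of $v_{j+1}\in\mathcal{V}_{j+1}$ via the bijection from the previous paragraph forces $\breve{\p{q}}$ to divide $\sigma_{j+1}\p{r}$ as polynomials; degree counting together with matching of leading coefficients then yields $\p{r} = \gamma\breve{\p{q}}$ for a scalar $\gamma\neq0$.

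The main obstacle is that when a pole $\mu_k$ coincides with an eigenvalue of $A$, the operator $(A-\mu_k I)$ is not invertible and the operator-level rearrangements above must be re-read as polynomial identities in $\mathbb{C}[x]$, or avoided altogether via a continuity argument. Concretely, one can perturb the diagonal entries of $(\underline{\breve H_{-j}},\underline{\breve K_{-j}})$ by some $\varepsilon>0$ so that the perturbed poles $\mu_k^{(\varepsilon)}$ lie outside $\Lambda(A)$, apply the nondegenerate case for which the commutativity argument of the paragraph preceding the theorem yields the claim directly, and then pass to the limit $\varepsilon\to 0$ after normalizing the leading coefficient of $\breve{\p{q}}^{(\varepsilon)}$ to stay bounded away from zero.
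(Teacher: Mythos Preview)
The paper does not give its own proof of this theorem; it is quoted from \cite[Thm.~4.4]{berljafa15} without argument. So there is nothing in the paper to compare against directly, and I will just comment on your sketch.

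Your reduction in the first paragraph is clean and correct. The inductive scheme in the second paragraph is the right idea, but the formula you write for $v_i$ involves $\breve{\p{q}}_{i-1}(A)^{-1}$, which is exactly the object that may fail to exist. You recognise this and offer two remedies in the third paragraph; of these, only the first (rewrite everything as polynomial identities) is viable. The $i$-th column of the RAD reads $(h_{i+1,i}I - k_{i+1,i}A)v_{i+1} = \sum_{l\le i}(k_{l,i}A - h_{l,i}I)v_l$, a relation that never requires inverting $A-\mu_i I$. Multiplying up, one obtains by induction that $\prod_{k<i}(h_{k+1,k}I - k_{k+1,k}A)\,v_i = \p{p}_i(A)\breve b$ with $\deg \p{p}_i \le i-1$; combining this for $i=j+1$ with $\p{q}(A)v_{j+1}\in\mathcal{K}_{j+1}(A,b)$ and the injectivity of $\p{p}\mapsto \p{p}(A)b$ on $\Pi_J$ for suitably large $J$ gives the divisibility and degree count you want, entirely within $\mathbb{C}[x]$.

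The continuity argument, as stated, does not work: if you perturb the diagonal entries of $(\underline{\breve H_{-j}},\underline{\breve K_{-j}})$ while keeping $\breve V_{j+1}$ fixed, the relation $A\breve V_{j+1}\underline{\breve K_j}=\breve V_{j+1}\underline{\breve H_j}$ is destroyed, so you no longer have an RKD at all and there is nothing to take a limit of. Also, the appeal to ``the commutativity argument of the paragraph preceding the theorem'' is a misreading: that paragraph shows that if $\breve b=\p{q}(A)^{-1}\breve{\p{q}}(A)b$ with $\breve{\p{q}}(A)$ invertible then the two rational Krylov spaces coincide; it does not show that the poles of a given RKD with starting vector $\breve b$ must be the roots of $\breve{\p{q}}$, which is the content of the theorem. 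Drop the continuity route and carry out the polynomial-identity version of your induction; that closes the gap.
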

Let $0\neq\breve b\in \mathcal{K}_j^+(A,b,\p{q})$ be an arbitrary element of the augmented Krylov subspace with associated RKD \eqref{eq:RKD}. Thus, there is a polynomial $\breve{\p{q}}\in \Pi_j$ with $\breve b=\p{q}(A)^{-1}\breve{\p{q}}(A)b$ and a nontrivial vector $u_1$ with $\breve b=V_{j+1}u_1$. Let $U$ be a regular matrix with $Ue_1=u_1$, so $\breve b=V_{j+1}Ue_1$ holds. Herewith \Cref{thm:rootfinding} allows us to determine the roots of the nominator polynomial $\breve{\p{q}}$ as the poles of the RKD \eqref{eq:RADU}, i.e. the generalized eigenvalues of $(\underline{\breve H_{-j}},\underline{\breve K_{-j}})$.

The definition of rational Krylov subspaces and rational Arnoldi decompositions is generalized in \cite{elsworth20} to the case where the vector $b\in \mathbb{C}^n$ is replaced by a matrix (or \emph{block vector}) $\B\in \mathbb{C}^{n\times p}$. We recall some of the important definitions. The block Krylov subspace or order $j$ is given by
\begin{align}\label{eq:bkrylov}
	\mathcal{K}_{j}^\square(A,\B) &= \text{blockspan}\! \left\{ \B,A\B, \dots, A^{j-1}\B \right\}\\
	&= \left\{ \sum_{k=0}^{j-1} A^k\B C_k \mid C_k\in \mathbb{C}^{p\times p} \right\}.
\end{align}
We only consider the case where the $jp$ columns of $\begin{bmatrix}\B & A\B & \cdots &A^{j-1}\B \end{bmatrix}$ are linearly independent so the block Krylov subspace \eqref{eq:bkrylov} has dimension $jp^2$. Every block vector $\sum_{k=0}^{j-1} A^k\B C_k\in \mathcal{K}_{j}^\square(A,\B)$ corresponds to exactly one matrix polynomial $\sum_{k=0}^{j-1} z^kC_k$.

For the definition of a \emph{block rational Krylov subspace} we again use a polynomial $\p{q}\in \Pi_j$ of degree at most $j$ with no roots in $\Lambda(A)$ and set
\begin{align}
	\mathcal{K}_{j}^\square(A,\B,\p{q}) = \p{q}(A)^{-1}\mathcal{K}_{j}^\square(A,\B).
\end{align}
As in the non-block part we set $\mathcal{K}_{j}^\square(A,\B,s)=\mathcal{K}_{j}^\square(A,\B,\p{q})$ with the set $s\subset \mathbb{C}\cup\left\{ \infty \right\}$ of the $j$ roots of $\p{q}$.

Before we can define block rational Arnoldi decompositions we need the following definition of a block upper-Hessenberg matrix.
\begin{definition}[{cf. \cite[Def. 2.1]{elsworth20}}]
	The block matrix
	\begin{align}
		\underline{H_j} = \begin{bmatrix}H_{11} & \cdots & H_{1p}\\
			H_{21} & \cdots & H_{2p}\\
			& \ddots & \vdots\\
			&&H_{j+1,j}
		\end{bmatrix}\in \mathbb{C}^{(j+1)p\times jp},\ H_{ik}\in\mathbb{C}^{p\times p}
	\end{align}
	is called a \emph{block upper-Hessenberg matrix}. For block upper-Hessenberg matrices $\underline{H_{j}}$ and $\underline{K_{j}}$ the pencil $(\underline{H_{j}},\underline{K_{j}})$ is called an \emph{unreduced block upper-Hessenberg pencil} if one of the subdiagonal blocks $H_{i+1,i}$ or $K_{i+1,i}$ is nonsingular for every $i=1, \dots, j$.
\end{definition}
The next definition generalizes RADs to the block case.
\begin{definition}[{see \cite[Def. 2.2]{elsworth20}}]\label{def:BRAD}
	Let $A\in \mathbb{C}^{n\times n}$. A relation of the form
	\begin{align}
		AV_{j+1}\underline{K_{j}} = V_{j+1}\underline{H_{j}} 
	\end{align}
	is called a \emph{block rational Arnoldi decomposition} (BRAD) if the following conditions are satisfied:
	\begin{enumerate}
		\item $V_{j+1}$ is of full column rank,
		\item $(\underline{H_{j}},\underline{K_{j}})$ is an unreduced block upper-Hessenberg pencil,
		\item $\alpha_i K_{i+1,i}=\beta_iH_{i+1,i}$ holds for some scalars $\alpha_i, \beta_i\in \mathbb{C}$ not both zero, \label{it:BRADab}
		\item the numbers $\mu_i=\alpha_i/\beta_i$ are outside the spectrum $\Lambda(A)$
	\end{enumerate}
	for $i=1, \cdots, j$.
	The numbers $\mu_i\in \mathbb{C}\cup \left\{ \infty \right\}$ are called the \emph{poles} of the BRAD.
\end{definition}
All results presented in the following for Krylov subspaces can be generalized to block Krylov subspaces if not stated otherwise.

\section{Existence and uniqueness of the Riccati ADI solution}\label{sec:approx_riccati}
In this section we derive an existence and uniqueness result for the ADI-type solution of the Riccati equation \eqref{def:are}. For ease of presentation, we first consider the case $C\in \mathbb{C}^{1\times n}$, i.e. $p=1$, and comment on a general $p\in \mathbb{N}$ in \Cref{sec:blockC}.

Consider an approximate solution of \eqref{def:are} of the form $X_j=Z_jY_jZ_j^*$, then the Riccati residual reads
\begin{align}\label{eq:RXj}
	\mathcal{R}(X_j) = A^{*}Z_jY_jZ_j^* + Z_jY_jZ_j^*A + C^*C - Z_jY_jZ_j^*BB^{*}Z_jY_jZ_j^*.
\end{align}
We intend to use a RAD to rewrite \eqref{eq:RXj} in terms of the basis of an augmented Krylov subspace. This will enable us to derive an equation which makes it possible to obtain $Y_j$ such that $\mathcal{R}(X_j)$ is of rank one. Looking at the Riccati residual from the left we see the terms $A^*Z_j,$ $Z_j$ and $C^*$. Due to symmetry, looking from the right we see the adjoint of these terms. Therefore let $Z_j$ be a basis of the Krylov subspace $\mathcal{K}_j(A^*,C^*,s)$ with the matrices $A^*$ and $C^*$ instead of $A$ and $b$ or $\B$. Let
\begin{align}\label{eq:rad_riccati}
	A^*V_{j+1}\underline{K_j} = V_{j+1}\underline{H_j}
\end{align}
be an associated RAD with $Z_j=V_{j+1}\underline{K_j}$, so $X_j = V_{j+1}\underline{K_j} Y_j \underline{K_j}^*V_{j+1}^*$ holds. As $C^*$ is an element of the augmented Krylov subspace there exists a vector $v\in \mathbb{C}^{j+1}$ with 
\begin{align}
	V_{j+1}v=C^{*}.
	\label{eq:v}
\end{align}
Now, with the help of \eqref{eq:rad_riccati} and \eqref{eq:v}, we rewrite the Riccati residual for $X_j$ as follows
\begin{align}
	\mathcal{R}(X_j)&=A^{*}X_j + X_jA + C^*C - X_jBB^{*}X_j\\
	&=A^{*}V_{j+1}\underline{K_j} Y_j \underline{K_j}^*V_{j+1}^* + V_{j+1}\underline{K_j} Y_j \underline{K_j}^*V_{j+1}^*A + V_{j+1}vv^*V_{j+1}^* \\
	&\phantom{=}- V_{j+1}\underline{K_j} Y_j \underbrace{\underline{K_j}^*V_{j+1}^*BB^{*}V_{j+1}\underline{K_j}}_{\eqqcolon S_j} Y_j \underline{K_j}^*V_{j+1}^*\\
	&=V_{j+1}\left(\underline{H_j} Y_j \underline{K_j}^* + \underline{K_j} Y_j \underline{H_j}^* + vv^*- \underline{K_j} Y_j S_j Y_j \underline{K_j}^*\right)V_{j+1}^*\label{eq:VresV}.
\end{align}
The term in brackets is a quadratic $(j+1)\times (j+1)$ matrix. To obtain a $Y_j$ such that the Riccati residual is of rank one we intend to utilize \eqref{eq:VresV} with a special form of the RAD \eqref{eq:rad_riccati}.
\begin{lemma}
	\label{lem:specialRAD}
	Let $\mathcal{K}_j(A^*,C^*,s)$ be an $A^*$-variant Krylov subspace with $\infty\not\in s$. There exists a RAD
\begin{align}\label{eq:RAD_KI}
	A^* V_{j+1}\underline{K_j} = V_{j+1}\underline{{H}_{j}}
\end{align}
	associated to this Krylov subspace with $\underline{K_j}=\begin{bmatrix}0\\ I\end{bmatrix},$ $\underline{H_j}=\begin{bmatrix}h_j\\ \underline{H_{-j}}\end{bmatrix}$, an upper triangular matrix $\underline{H_{-j}}$ and $V_{j+1}=\begin{bmatrix}C^* & Z_j\end{bmatrix}$ with a basis $Z_j$ of $\mathcal{K}_j(A^*,C^*,s)$.
\end{lemma}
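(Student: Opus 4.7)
The plan is to construct $V_{j+1}$, $\underline{K_j}$, and $\underline{H_j}$ explicitly and then verify every clause of the (generalized) RAD definition. Since the starting column and the shape of $\underline{K_j}$ are prescribed, the natural choice is to set $v_1 = C^*$ and generate the remaining columns by the one-term recursion $v_{i+1} = (A^* - s_i I)^{-1} v_i$ for $i = 1, \ldots, j$. Each resolvent is well-defined because $s \cap \Lambda(A^*) = \emptyset$ by the definition of a rational Krylov subspace. Unrolling the recursion yields $v_{i+1} = \prod_{k=1}^{i}(A^* - s_k I)^{-1} C^*$, which lies in $\mathcal{K}_i(A^*, C^*, \{s_1, \ldots, s_i\}) \subseteq \mathcal{K}_j(A^*, C^*, s)$, so $Z_j \coloneqq [v_2 \; \cdots \; v_{j+1}]$ sits inside the desired Krylov subspace from the outset.

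Rearranging the recursion as $A^* v_{i+1} = v_i + s_i v_{i+1}$ encodes the decomposition columnwise: with $\underline{K_j} = \begin{bmatrix}0\\ I\end{bmatrix}$ one has $V_{j+1}\underline{K_j} = Z_j$, and the identities give $A^* Z_j = V_{j+1}\underline{H_j}$ where $\underline{H_j}$ is upper bidiagonal, carrying ones on the main diagonal and $s_1, \ldots, s_j$ on the subdiagonal. Consequently the top row satisfies $h_j = e_1^*$, and the lower block $\underline{H_{-j}}$ is upper bidiagonal (in particular upper triangular) with spectrum $\{s_1, \ldots, s_j\}$ disjoint from $\Lambda(A^*)$. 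All remaining algebraic RAD requirements then follow for free: both $\underline{K_j}$ and $\underline{H_j}$ are upper Hessenberg, and $(\underline{H_{-j}}, \underline{K_{-j}}) = (\underline{H_{-j}}, I)$ is regular with exactly the prescribed poles.

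The main obstacle is showing that $V_{j+1}$ has full column rank. I would exploit both hypotheses: $A^*$-variance combined with $\infty \notin s$ forces $\dim \mathcal{K}_j^+(A^*, C^*, s) = j+1$, and since $\mathcal{K}_j^+(A^*, C^*, s) = \p{q}(A^*)^{-1}\mathcal{K}_{j+1}(A^*, C^*)$ with $\p{q}$ the denominator polynomial of roots $s$ and $\p{q}(A^*)$ invertible, this forces $\{C^*, A^*C^*, \ldots, (A^*)^j C^*\}$ to be linearly independent. A hypothetical nontrivial relation $\sum_i c_i v_i = 0$, multiplied through by $\prod_{k=1}^{j}(A^* - s_k I)$, becomes a nontrivial polynomial of degree at most $j$ in $A^*$ applied to $C^*$, contradicting that independence. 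Once $V_{j+1}$ is known to be full rank, the identity $V_{j+1}\underline{K_j} = Z_j$ places $j$ linearly independent vectors inside the $j$-dimensional subspace $\mathcal{K}_j(A^*, C^*, s)$, confirming that $Z_j$ is a basis of that Krylov subspace and completing the proof.
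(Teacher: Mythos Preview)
Your argument is correct and takes a genuinely different route from the paper. The paper invokes the abstract RAD existence theorem (\Cref{thm:RADexists}) to obtain some RAD $A^*\tilde V_{j+1}\underline{\tilde K_j}=\tilde V_{j+1}\underline{\tilde H_j}$ with starting vector $C^*$, and then post-composes with the upper triangular matrix $R=[\alpha e_1\ \underline{\tilde K_j}]$; regularity of $R$ is argued from $C^*\notin\mathcal{K}_j(A^*,C^*,s)$, which is where the hypothesis $\infty\notin s$ enters. You instead build the decomposition directly via the one-term recursion $v_{i+1}=(A^*-s_iI)^{-1}v_i$, obtaining an explicit bidiagonal $\underline{H_j}$, and you use $\infty\notin s$ (so $\deg\p{q}=j$) together with $A^*$-variance to push the full-rank question back to linear independence of $C^*,A^*C^*,\ldots,(A^*)^jC^*$. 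Your approach is more elementary and self-contained, and as a bonus it produces exactly the canonical bidiagonal form that the iterative schemes in \Cref{sec:iterexpa} generate in practice; the paper's approach is shorter because it outsources the heavy lifting to the cited RAD theory and yields the result for an arbitrary ordering of the poles without committing to a specific $\underline{H_{-j}}$.
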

\begin{proof}
	As the Krylov subspace $\mathcal{K}_j(A^*,C^*,s)$ is $A^*$-variant the augmented space $\mathcal{K}_j^{+}(A^*,C^*,s)$ is of dimension $j+1$. Thus \Cref{thm:RADexists} guarantees the existence of a RAD
\begin{align}\label{eq:radtilde}
	A^* \tilde V_{j+1}\underline{\tilde K_j} = \tilde V_{j+1}\underline{{\tilde H}_{j}}%
\end{align}
associated to $\mathcal{K}_j(A^*,C^*,s)$ with starting vector $C^*\in \spa\{\tilde V_{j+1}e_1\}$. We aim at constructing an upper triangular matrix $R$ to transform the RAD \eqref{eq:radtilde} into the RAD \eqref{eq:RAD_KI} with $V_{j+1}=\tilde V_{j+1}R,$ $\underline{K_j}=R^{-1}\underline{\tilde K_j}$ and $\underline{H_j}=R^{-1}\underline{\tilde H_j}$.
This regular $R$ is constructed as follows. Let $\alpha\in \mathbb{C}$ so that $C^* = \tilde V_{j+1}\alpha e_1$ and set $R=\begin{bmatrix}\alpha e_1& \underline{\tilde K_j}\end{bmatrix}$ to obtain $\underline{K_j}=\begin{bmatrix}0\\ I\end{bmatrix}$ and $V_{j+1}e_1=C^*$. As we assumed that $\infty\not\in s$ we have, due to \eqref{eq:ratpolydec} and \eqref{eq:VK}, that $C^*\not\in\mathcal{K}_j(A^*,C^*,s)=\spa\{\tilde V_{j+1}\underline{\tilde K_j}\}$. Therefore $e_1\not\in\spa\{\underline{K_j}\}$ and $R$ is indeed regular. As $\underline{\tilde K_j}$ is an upper Hessenberg matrix, $R$ is an upper triangular matrix and so is $\underline{H_{-j}}$. Finally, the structure of $K_{j}$ and \eqref{eq:VK} imply that $Z_j=V_{j+1}\underline{K_j}$ is a basis of $\mathcal{K}_j(A^*,C^*,s),$ which completes the proof. \customqed
\end{proof}

With the assumptions of \Cref{lem:specialRAD} let the RAD \eqref{eq:RAD_KI} be given. In particular hereby $C^*\not\in\mathcal{K}_j(A^*,C^*,s)$ is implied. Due to $V_{j+1}=\begin{bmatrix}C^* & Z_j\end{bmatrix}$ and $\underline{K_j}=\begin{bmatrix}0\\ I\end{bmatrix}$ we have $v=e_1$ in \eqref{eq:v} and $S_j=Z_j^*BB^*Z_j$ in \eqref{eq:VresV}. As $V_{j+1}$ has full rank, we find that the rank of the residual $\mathcal{R}(X_j)$ is the same as the rank of the inner matrix
\begin{align}
	M_j&\coloneqq\underline{H_j} Y_j \underline{K_j}^* + \underline{K_j} Y_j \underline{H_j}^* + e_1e_1^*- \underline{K_j} Y_j S_j Y_j \underline{K_j}^*\\
	&=\begin{bmatrix}0& h_j Y_j\\ 0&\underline{H_{-j}} Y_j\end{bmatrix} + \begin{bmatrix}0&0 \\Y_jh_j^*& Y_j\underline{H_{-j}}^* \end{bmatrix} + \begin{bmatrix} 1 & 0\\ 0& 0 \end{bmatrix} - \begin{bmatrix} 0&0\\ 0& Y_j S_j Y_j \end{bmatrix}\\
	&=\begin{bmatrix} 1 & h_jY_j\\ Y_jh_j^*& \underline{H_{-j}} Y_j+ Y_j\underline{H_{-j}}^* - Y_j S_j Y_j \end{bmatrix}\label{eq:rk1}
\end{align}
from \eqref{eq:VresV}. Thus we are interested in finding $Y_j$ such that $M_j$ is of rank 1. This is the case if and only if
\begin{align}
	M_j &= \begin{bmatrix} 1\\ Y_jh_j^*\end{bmatrix}\begin{bmatrix} 1& h_jY_j\end{bmatrix}\label{eq:mfac}\\
	&=\begin{bmatrix} 1 & h_jY_j\\ Y_jh_j^* & Y_jh_j^*h_jY_j \end{bmatrix}\label{eq:rk2}
\end{align}
holds, which is a rank-1 factorization in terms of the first row and column of $M_j$. From the lower right blocks of \eqref{eq:rk1} and \eqref{eq:rk2} we find that therefore $Y_j$ must be chosen to be a solution of
\begin{align}\label{eq:ssricc}
	\begin{split}
	0 &= \underline{H_{-j}} Y_j+ Y_j\underline{H_{-j}}^* - Y_j S_j Y_j - Y_jh_j^*h_jY_j\\
	&= \underline{H_{-j}} Y_j+ Y_j\underline{H_{-j}}^* - Y_j (S_j + h_j^*h_j)Y_j.
\end{split}
\end{align}
Assume that there exists a full-rank solution $Y_j$ of the homogeneous Riccati equation \eqref{eq:ssricc}, which is desirable as otherwise not all information available in the basis $Z_j$ is incorporated into the ADI approximate solution $X_j$. Such a solution exists if and only if the Lyapunov equation
\begin{align}\label{eq:sslyap}
	0 &= \tilde Y_j\underline{H_{-j}}+ \underline{H_{-j}}^*\tilde Y_j - (S_j + h_j^*h_j)
\end{align}
is solvable with a full-rank solution, and then $Y_j=\tilde Y_j^{-1}$ holds. Thus if the solution of \eqref{eq:sslyap} is of full rank and unique, so is the full-rank solution of \eqref{eq:ssricc}.

The Lyapunov equation \eqref{eq:sslyap} is uniquely solvable with a Hermitian solution if and only if $\underline{H_{-j}}$ and $-\underline{H_{-j}}^*$ have no eigenvalues in common \cite[Thm. 5.2.2]{lancaster95}. Due to \Cref{thm:RADexists} the eigenvalues of $\underline{H_{-j}}$ are equal to the shifts $s$. Therefore the eigenvalue condition $\Lambda(\underline{H_{-j}})\cap \Lambda(-\underline{H_{-j}}^*)=\emptyset$ is equivalent to the shift condition 
$s\cap -\overline s=\emptyset$.

We next proof regularity of the solution $\tilde Y_j$ of \eqref{eq:sslyap}. Assume that $(h_j,\underline{H_{-j}})$ is unobservable, i.e. the observability matrix $\begin{bmatrix}h_j^* & \underline{H_{-j}}^*h_j^* & \cdots & (\underline{H_{-j}}^*)^{j-1}h_j^*\end{bmatrix}^*$ is of rank smaller than $j$. Then due to \cite[Thm. 4.26]{Ant2005} there exists an vector $u\neq 0$ with $h_ju=0$ and $\underline{H_{-j}}u=\mu u$. It holds $\mu\in s$ as the eigenvalues of $\underline{H_{-j}}$ are the poles of the Krylov subspace. Thus we find due to \eqref{eq:RAD_KI}
\begin{align}
	A^{*}V_{j+1}\underline{H_j}u &= A^{*}V_{j+1}\begin{bmatrix}h_j\\ \underline{H_{-j}}\end{bmatrix}u = A^{*}V_{j+1}\begin{bmatrix}0\\ I\end{bmatrix}\mu u\\
	&= \mu V_{j+1}\underline{H_j}u
\end{align}
i.e. $V_{j+1}\underline{H_j}u$ is an eigenvector of $A^*$ with eigenvalue $\mu$. This is a contradiction to the definition of rational Krylov subspaces because the poles of the Krylov subspace must be distinct from the eigenvalues of $A^*$.
Thus $(h_j,\underline{H_{-j}})$ is observable and so $(\underline{H_{-j}}^*,h_j^*)$ is controllable \cite[Thm. 4.23]{Ant2005}. This implies controllability of $\big(\underline{H_{-j}}^*,\begin{bmatrix}h_j^*& Z_j^*B\end{bmatrix}\big)$ where $\begin{bmatrix}h_j^*& Z_j^*B\end{bmatrix}\begin{bmatrix}h_j\\ B^*Z_j\end{bmatrix}=h_j^*h_j + S_j$ holds with $S_j$ from \eqref{eq:VresV}. Herewith regularity of $\tilde Y_j$ can be shown in full analogy to the proof of \cite[Thm. 5.3.1 (b)]{lancaster95}.

Due to \eqref{eq:VresV} and \eqref{eq:mfac} the residual factor $R_j$ of the rank-1 residual $\mathcal{R}(X_j)=R_jR_j^{*}$ is given by
\begin{align}
	R_j&=V_{j+1}\begin{bmatrix}1\\ Y_j h_j^*\end{bmatrix}.
	\label{eq:Rj}
\end{align}
We summarize our findings in the next theorem.
\begin{theorem}
	\label{thm:exiuni}
	Let $\mathcal{K}_j(A^*,C^*,s)$ be an $A^*$-variant Krylov subspace with a basis $Z_j$. Among all matrices $Y_j\in \mathbb{C}^{j\times j}$ such that for $X_j=Z_jY_jZ_j^*$ the residual $\mathcal{R}(X_j)$ is of rank one, there exists a unique regular $Y_j$ if and only if $\infty\not\in s$ and $s\cap -\overline s=\emptyset$.
	Such a unique regular $Y_j$ is determined by the small scale Lyapunov equation
	\begin{align}\label{eq:yeqexiuni}
		0 &= Y_j^{-1}\underline{H_{-j}}+ \underline{H_{-j}}^*Y_j^{-1} - (S_j + h_j^*h_j)
	\end{align}
	where $\underline{H_{-j}}$ and $h_j$ are as in the associated RAD \eqref{eq:RAD_KI} and $S_j$ is as in \eqref{eq:VresV}.
\end{theorem}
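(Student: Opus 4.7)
The plan is to leverage the detailed computation already assembled above Lemma~\ref{lem:specialRAD} and Theorem~\ref{thm:exiuni} so that the proof reduces to a clean matching argument: every rank-one residual $\mathcal{R}(X_j)$ corresponds bijectively (after a change of basis) to a rank-one small inner matrix $M_j$, and the latter is governed by the quadratic matrix equation~\eqref{eq:ssricc}. I would start by noting that the rank of $\mathcal{R}(X_j)=V_{j+1}M_jV_{j+1}^*$ equals the rank of $M_j$ because $V_{j+1}$ has full column rank, so the problem of producing a rank-one Riccati residual becomes that of producing a rank-one $M_j$.

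For the "if" direction, I would invoke Lemma~\ref{lem:specialRAD}, whose applicability requires exactly the hypothesis $\infty\not\in s$; this yields the RAD with $\underline{K_j}=[0;I]$ and $v=e_1$ that normalises the $(1,1)$-block of $M_j$ to $1$. A rank-one Hermitian matrix with unit upper-left entry is uniquely determined by its first row and column, so $M_j$ is rank one if and only if its Schur complement $\underline{H_{-j}}Y_j+Y_j\underline{H_{-j}}^*-Y_j(S_j+h_j^*h_j)Y_j$ vanishes, which is exactly~\eqref{eq:ssricc}. A regular $Y_j$ solves this homogeneous Riccati equation iff $\tilde Y_j=Y_j^{-1}$ solves the linear Lyapunov equation~\eqref{eq:sslyap}, and classical Lyapunov theory (Lancaster–Rodman, Thm.~5.2.2) gives a unique Hermitian solution precisely under $\Lambda(\underline{H_{-j}})\cap\Lambda(-\underline{H_{-j}}^*)=\emptyset$. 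Since the spectrum of $\underline{H_{-j}}$ equals the pole set $s$ by Theorem~\ref{thm:RADexists}, this is the condition $s\cap-\overline{s}=\emptyset$. Regularity of $\tilde Y_j$ then follows, as sketched in the excerpt, by deducing observability of $(h_j,\underline{H_{-j}})$ from the fact that a shared eigenvector would produce an eigenvector of $A^*$ with eigenvalue in $s$ (contradicting the RKD assumption $s\cap\Lambda(A^*)=\emptyset$), and then imitating the argument of \cite[Thm.~5.3.1(b)]{lancaster95} for the enlarged controllable pair $\bigl(\underline{H_{-j}}^*,[h_j^*\ Z_j^*B]\bigr)$.

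For the "only if" direction I would argue contrapositively. If $\infty\in s$, then by the decomposition~\eqref{eq:ratpolydec} we have $C^*\in\mathcal{K}_j(A^*,C^*,s)$, so one can expand $V_{j+1}$ so that $C^*$ lies in $\spa\{Z_j\}$; the term $C^*C$ in~\eqref{eq:RXj} is then absorbed into $Z_j(\cdot)Z_j^*$ and the residual structure forces $M_j$ to have a zero first row and column, so any rank-one realisation forces $M_j\equiv0$ and uniqueness of a regular $Y_j$ fails (there is a one-parameter family, or else none). If $s\cap-\overline s\neq\emptyset$, then $\underline{H_{-j}}$ and $-\underline{H_{-j}}^*$ share an eigenvalue and the Lyapunov equation~\eqref{eq:sslyap} is either unsolvable or has a non-unique solution, whence the corresponding $Y_j$ fails to be unique and regular. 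Finally, I would close by observing that~\eqref{eq:yeqexiuni} is merely~\eqref{eq:sslyap} written in terms of $Y_j^{-1}$, so the displayed equation indeed characterises the unique regular $Y_j$.

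The main obstacle I anticipate is the necessity part, specifically handling $\infty\in s$ cleanly: the construction via Lemma~\ref{lem:specialRAD} breaks down in this case, so one has to argue directly from the structure of the residual~\eqref{eq:VresV} that no basis normalisation can restore uniqueness of a regular $Y_j$. The rest is essentially a bookkeeping assembly of results already present in the text.
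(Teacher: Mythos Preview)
Your approach is essentially the paper's own: reduce to the rank of the inner matrix $M_j$ via the full-rank factor $V_{j+1}$; for the ``if'' direction use Lemma~\ref{lem:specialRAD}, the Schur-complement characterisation~\eqref{eq:ssricc}--\eqref{eq:sslyap}, the eigenvalue criterion for Lyapunov uniqueness, and the observability argument for regularity; for the ``only if'' direction treat $\infty\in s$ and $s\cap-\overline s\neq\emptyset$ separately.

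There is one slip in your treatment of $\infty\in s$. When $C^*\in\mathcal{K}_j(A^*,C^*,s)$ one can still arrange $\underline{K_j}=\bigl[\begin{smallmatrix}0\\I\end{smallmatrix}\bigr]$ (the paper does this by taking $R=[w\ \underline{\tilde K_j}]$ with any $w$ making $R$ regular), but now $v=\bigl[\begin{smallmatrix}0\\\tilde v\end{smallmatrix}\bigr]$, so only the $(1,1)$-entry of $M_j$ vanishes from the structure, not the whole first row and column. The first row of $M_j$ is $[\,0\ \ h_jY_j\,]$; it is the rank-one condition \emph{combined with} the zero $(1,1)$-entry that then forces $h_jY_j=0$. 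The correct conclusion is not that $M_j\equiv 0$ (the lower-right block may still have rank one), but rather that $Y_jh_j^*=0$; since $A^*$-variance gives $h_j\neq0$, this forces $Y_j$ to be singular. Hence the conclusion is stronger than ``uniqueness fails'': no regular $Y_j$ exists at all in this case. With this correction your argument coincides with the paper's.
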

\begin{proof}
	We already constructed the unique rank-1 residual approximation for the case $\infty\not\in s$ and $s\cap -\overline s=\emptyset$ preceding this theorem. Now assume $C^*\in\mathcal{K}_j(A^*,C^*,s)$ which means that $s$ does contain the pole infinity.

	As in the proof of \Cref{lem:specialRAD} we can transform a RAD $A^* \tilde V_{j+1}\underline{\tilde K_j} = \tilde V_{j+1}\underline{{\tilde H}_{j}}$ associated to $\mathcal{K}_j(A^*,C^*,s)$ with a regular matrix $R=\begin{bmatrix}w& \underline{\tilde K_j}\end{bmatrix}$ to obtain a RAD with $\underline{K_j}$ and $\underline{H_j}$ as in \eqref{eq:RAD_KI}, but with a different $V_{j+1}$.
Due to $C^*\in \mathcal{K}_j(A^*,C^*,s)=\spa\{V_{j+1}\underline{K_j}\}$ there is a $\tilde v\in \mathbb{C}^{j}$ with $C^*=V_{j+1}\underline{K_j}\tilde v$. Thus we have $v=\begin{bmatrix}0\\ \tilde v\end{bmatrix}$ in \eqref{eq:v} because of the special structure of $\underline{K_j}$.
Again from \eqref{eq:VresV} we find that the rank of the residual is the same as the rank of the inner matrix
\begin{align}
	M_j&=\underline{H_j} Y_j \underline{K_j}^* + \underline{K_j} Y_j \underline{H_j}^* + vv^*- \underline{K_j} Y_j S_j Y_j \underline{K_j}^*\\
	&=\begin{bmatrix}0& h_j Y_j\\ 0&\underline{H_{-j}} Y_j\end{bmatrix} + \begin{bmatrix}0&0 \\Y_jh_j^*& Y_j\underline{H_{-j}}^* \end{bmatrix} + \begin{bmatrix} 0 & 0\\ 0& \tilde v\tilde v^* \end{bmatrix} - \begin{bmatrix} 0&0\\ 0& Y_j S_j Y_j \end{bmatrix}\\
	&=\begin{bmatrix} 0 & h_jY_j\\ Y_jh_j^*&\tilde v\tilde v^* + \underline{H_{-j}} Y_j+ Y_j\underline{H_{-j}}^* - Y_j S_j Y_j \end{bmatrix}.
\end{align}
Assume this matrix is of rank 1. Then due to the first diagonal entry being zero the first row and column have to be zero, i.e. $Y_jh_j^*=0$. This is only possible if $Y_j$ is singular or $h_j=0$. In the latter case the Krylov subspace is $A^*$-invariant. In both cases the assumptions of the theorem are violated and no approximation with the wanted properties exists. \customqed
\end{proof}

\begin{remark}
	Under the conditions of \Cref{thm:exiuni} with $\infty\not\in s$ and $s\cap -\overline{s}=\emptyset$ there exist many approximate solutions $\tilde X_j=Z_j\tilde Y Z_j^*$ of \eqref{eq:VresV} resulting in a rank-1 residual if the full-rank condition on $\tilde Y$ is omitted. Consider e.g. the trivial solution $\tilde Y=0$ of \eqref{eq:ssricc} resulting in the residual factor $C^*$. For other approximations with rank-1 residual consider a subset $\tilde s\subset s$ with $\tilde j$ elements. Clearly $\tilde s$ fulfills the assumptions of \Cref{thm:exiuni} and so there exists a rank-$\tilde j$ approximate solution $\tilde X_j$ resulting in a rank-1 residual. Due to $\mathcal{K}_{\tilde j}(A^*,C^*,\tilde s)\subset \mathcal{K}_j(A^*,C^*,s)$ this solution can be represented by $\tilde X_j=Z_j\tilde Y Z_j^{*}$ with a rank-$\tilde j$ matrix $\tilde Y$. If the shifts in $s$ are pairwise distinct there exist $2^j$ subsets of $s$ and so there are $2^j$ approximate solutions of the form $Z_j\tilde Y Z_j^*$ yielding a rank-1 residual.
\end{remark}

\subsection{Obtaining the Riccati ADI solution via projection}
We now discuss how the Riccati ADI approximate solution can be interpreted as the solution of a projection of the large scale Riccati equation \eqref{def:are} onto a Krylov subspace. For a general projection method let $Z,W\in \mathbb{C}^{n\times j}$ be matrices of rank $j$, i.e. bases of certain $j$ dimensional subspaces, with regular $Z^*W$. Then $\Pi=Z(W^*Z)^{-1}W^*\in \mathbb{C}^{n\times n}$ is a projection onto $\im(\Pi)=\im(Z)$ along $\ker(\Pi)=\ker(W^*)$. Set $\tilde W\coloneqq W(Z^*W)^{-1}$, then $\Pi=Z\tilde W^*$ and it holds $\tilde W^* Z=I$. Let the approximate solution $X_j\approx X$ to the Riccati equation \eqref{def:are} lie in $\im(Z)$ with the representation $X_j=ZY_jZ^*$. Projection of \eqref{def:are} yields the equation
	\begin{align}
		\Pi \mathcal{R}(X_j) \Pi^* = 0
		\label{eq:areP}
	\end{align}
	which is then solved for $Y_j$. Let for instance $Z=W$ be a basis of the Krylov subspace $\mathcal{K}_j(A^*,C^*,s)$, so $\Pi=\Pi^*$ is an orthogonal projection onto the Krylov subspace. This is just the approach used in *KSM.
	
	In the following theorem we show how with the assumptions of \Cref{thm:exiuni} the Riccati ADI approximation can be obtained as the solution of the projected Riccati equation \eqref{eq:areP} using an oblique projection. It is a generalization of \cite[Sec. 3.2]{wolf16}, \cite[Rem. 5.16]{wolf14}, where a similar statement for the ADI iteration to solve Lyapunov equations is presented.
	\begin{theorem}\label{thm:proj}
		Let $\mathcal{K}_j(A^*,C^*,s)$ be an $A^*$-variant Krylov subspace with shifts $s\subset \mathbb{C}$ and $s\cap -\overline s=\emptyset$. Let $A^*V_{j+1}\begin{bmatrix}0\\ I\end{bmatrix}=V_{j+1}\begin{bmatrix}h_j\\ \underline{H_{-j}}\end{bmatrix}$ be an associated RAD as in \Cref{lem:specialRAD} with $V_{j+1}=\begin{bmatrix}C^*& Z_j\end{bmatrix}$. Let $\Pi$ be a projection onto $\im(Z_j)$ along $\ker(W^*)$. If the Riccati residual factor $R_j$ as in \eqref{eq:Rj} is contained in the kernel of the projection $\Pi$, i.e. $\tilde W\perp R_j$, then the projected equation \eqref{eq:areP} is equivalent to the small scale Riccati equation \eqref{eq:ssricc}.
\end{theorem}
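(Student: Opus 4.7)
The plan is to evaluate $\Pi\mathcal{R}(X_j)\Pi^*$ directly by writing the projector in factored form $\Pi=Z_j\tilde W^*$ with $\tilde W=W(Z_j^*W)^{-1}$, so that $\tilde W^*Z_j=I$. Because $Z_j$ has full column rank, the projected equation $\Pi\mathcal{R}(X_j)\Pi^*=0$ is equivalent to the vanishing of the inner $j\times j$ block $\tilde W^*\mathcal{R}(X_j)\tilde W$. The whole task therefore reduces to computing this inner block and showing it equals the left-hand side of \eqref{eq:ssricc}.

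The first step is to substitute the special RAD \eqref{eq:RAD_KI} into the residual expression \eqref{eq:VresV}. With $\underline{K_j}=\begin{bmatrix}0\\ I\end{bmatrix}$, $\underline{H_j}=\begin{bmatrix}h_j\\ \underline{H_{-j}}\end{bmatrix}$, $V_{j+1}=\begin{bmatrix}C^*& Z_j\end{bmatrix}$ and $v=e_1$, one recovers $\mathcal{R}(X_j)=V_{j+1}M_jV_{j+1}^*$ with the block matrix $M_j$ from \eqref{eq:rk1}. Since $\tilde W^*Z_j=I$, the product $\tilde W^*V_{j+1}$ has the form $\begin{bmatrix}\tilde W^*C^* & I\end{bmatrix}$, so only the scalar (in the $p=1$ case, or $p\times p$ block in general) $\tilde W^*C^*$ is still unknown.

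The orthogonality hypothesis $\tilde W\perp R_j$ pins this quantity down. Using $R_j=V_{j+1}\begin{bmatrix}1\\ Y_jh_j^*\end{bmatrix}$, the condition $\tilde W^*R_j=0$ becomes $\tilde W^*C^* + Y_jh_j^* = 0$, hence $\tilde W^*C^* = -Y_jh_j^*$ and $\tilde W^*V_{j+1}=\begin{bmatrix}-Y_jh_j^* & I\end{bmatrix}$. Substituting this and multiplying out
$\begin{bmatrix}-Y_jh_j^* & I\end{bmatrix}M_j\begin{bmatrix}-h_jY_j\\ I\end{bmatrix}$
causes the contributions from the $(1,1)$-block and the two off-diagonal blocks of $M_j$ to cancel (the first row of the intermediate product has a zero in its first entry from $-Y_jh_j^*\cdot 1 + I\cdot Y_jh_j^*=0$), and what remains simplifies to $\underline{H_{-j}}Y_j + Y_j\underline{H_{-j}}^* - Y_j(S_j + h_j^*h_j)Y_j$, precisely the left-hand side of \eqref{eq:ssricc}. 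Equivalence of the two equations then follows from the full column rank of $Z_j$.

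The proof is essentially a single block-matrix computation together with the full-rank cancellation at the end; the only nonroutine step is translating the abstract orthogonality hypothesis $\tilde W\perp R_j$ into the concrete identity $\tilde W^*C^* = -Y_jh_j^*$ that lets $\tilde W^*V_{j+1}$ be written explicitly in terms of $Y_j$ and $h_j$. Everything else is bookkeeping against the structure granted by \Cref{lem:specialRAD}.
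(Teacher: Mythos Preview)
Your proposal is correct and follows essentially the same approach as the paper: both derive $\tilde W^*C^*=-Y_jh_j^*$ from the orthogonality hypothesis, form $\tilde W^*V_{j+1}=\begin{bmatrix}-Y_jh_j^* & I\end{bmatrix}$, and then multiply out $\tilde W^*V_{j+1}\,M_j\,V_{j+1}^*\tilde W$ using the block form \eqref{eq:rk1} to recover \eqref{eq:ssricc}, with the full column rank of $Z_j$ giving the equivalence. The only cosmetic difference is that you first observe the vanishing of the $(1,1)$ entry of the intermediate product, whereas the paper expands all four block contributions and cancels afterwards.
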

\begin{proof}
	From the orthogonality condition of the residual factor $R_j=V_{j+1}\begin{bmatrix}1\\ Y_j h_j^*\end{bmatrix}$ we find
	\begin{align}
		0&=\tilde W^* R_j = \tilde W^{*} \begin{bmatrix}C^* & Z_j\end{bmatrix}\begin{bmatrix}1\\ Y_j h_j^*\end{bmatrix}	 = \tilde W^* (C^* + Z_jY_j h_j^*),
	\end{align}
	or equivalently
	\begin{align}
		 \tilde W^* C^* &=  - \tilde W^*Z_jY_j h_j^* =  - Y_j h_j^*.
	\end{align}
	Herewith we obtain
	\begin{align}
		\tilde W^*V_{j+1}&=\tilde W^*\begin{bmatrix}C^* & Z_j\end{bmatrix} = \begin{bmatrix}\tilde W^*C^* & \tilde W^*Z_j\end{bmatrix} = \begin{bmatrix}-Y_j h_j^* & I\end{bmatrix}.
	\end{align}
	As $Z_j$ is a basis it holds $\ker(Z_j)=\{ 0 \}$, so with $\Pi=Z_j\tilde W^{*}$ the projected equation \eqref{eq:areP} is equivalent to $\tilde W^*\mathcal{R}(X_j)\tilde W=0$. With \eqref{eq:VresV} and \eqref{eq:rk1} we obtain
	\begin{align}
		\tilde W ^*\mathcal{R}(X_j)\tilde W &= \tilde W^* V_{j+1}\begin{bmatrix} 1 & h_jY_j\\ Y_jh_j^*& \underline{H_{-j}} Y_j+ Y_j\underline{H_{-j}}^* - Y_j S_j Y_j \end{bmatrix}V_{j+1}^* \tilde W \\
		&= \begin{bmatrix}-Y_j h_j^* & I\end{bmatrix}\begin{bmatrix} 1 & h_jY_j\\ Y_jh_j^*& \underline{H_{-j}} Y_j+ Y_j\underline{H_{-j}}^* - Y_j S_j Y_j \end{bmatrix}\begin{bmatrix}-h_jY_j \\ I\end{bmatrix}\\
		&= Y_jh_j^*h_jY_j - Y_jh_j^*h_jY_j - Y_jh_j^*h_jY_j + \underline{H_{-j}} Y_j+ Y_j\underline{H_{-j}}^* - Y_j S_j Y_j \\
		&= - Y_jh_j^*h_jY_j + \underline{H_{-j}} Y_j+ Y_j\underline{H_{-j}}^* - Y_j S_j Y_j.
	\end{align}
	This is the right hand side of \eqref{eq:ssricc} which concludes the proof. \customqed
\end{proof}
If in the above theorem $\Pi$ is an orthogonal projection then $\im(\Pi) \perp \ker(\Pi)$ holds. It follows that the conditions $R_j\in \ker(\Pi)$ and $R_j\perp \mathcal{K}_j(A^*,C^*,s)$ are equivalent. Further, the approximations generated by *KSM and the Riccati ADI methods coincide.

Although $\tilde W$ is unknown in practice, the projected system matrices $\tilde W^*A^*Z_j$ and $\tilde W^*(A^*- X_jBB^*)Z_j$ can be expressed in terms of parts of the RAD. This relation is established in the next lemma, which will also be useful in the proofs of the subsequent theorems.
\begin{lemma}\label{lem:proj_sysmat}
Let $\mathcal{K}_j(A^*,C^*,s)$ be an $A^*$-variant Krylov subspace. Let 
\begin{align}\label{eq:RADrat}
	A^*\begin{bmatrix}C^*& Z_j\end{bmatrix}\begin{bmatrix}0\\I\end{bmatrix}=\begin{bmatrix}C^*& Z_j\end{bmatrix}\begin{bmatrix}h_j\\ \underline{H_{-j}}\end{bmatrix}
\end{align}
	be an associated RAD such that \eqref{eq:yeqexiuni} holds, i.e.
	\begin{align}\label{eq:yeqexiuniRADrat}
		0 &= Y_j^{-1}\underline{H_{-j}}+ \underline{H_{-j}}^*Y_j^{-1} - (S_j + h_j^*h_j)
	\end{align}
	as in \Cref{thm:exiuni}. Let $\Pi$ be a projection onto $\im(Z_j)$ along $\ker(W^*)$ with $\tilde W\perp R_j$. Then
\begin{align}
		\tilde W^*A^*Z_j &= -Y_jh_j^*h_j+\underline{H_{-j}}\\
		&= -Y_j\underline{H_{-j}}^*Y_j^{-1} + Y_jS_j
\end{align}
and
\begin{align}
	\tilde W^*(A^*- X_jBB^*)Z_j &= -Y_j\underline{H_{-j}}^*Y_j^{-1}\\
	&= \underline{H_{-j}} - Y_jS_j - Y_jh_j^*h_j
\end{align}
hold.
\end{lemma}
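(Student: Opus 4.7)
The plan is to build directly on ingredients already assembled for the proof of \Cref{thm:proj}. First I would extract from the RAD \eqref{eq:RADrat} the block identity
\begin{align*}
	A^*Z_j = A^*V_{j+1}\underline{K_j} = V_{j+1}\underline{H_j} = C^*h_j + Z_j\underline{H_{-j}}.
\end{align*}
Next I would recall from the derivation inside the proof of \Cref{thm:proj} that the orthogonality condition $\tilde W^*R_j=0$ together with $\tilde W^*Z_j=I$ (which follows from $\tilde W=W(Z_j^*W)^{-1}$) yields $\tilde W^*C^* = -Y_jh_j^*$, and therefore $\tilde W^*V_{j+1} = \begin{bmatrix}-Y_jh_j^*& I\end{bmatrix}$.

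Left-multiplying the RAD by $\tilde W^*$ then immediately produces the first representation,
\begin{align*}
	\tilde W^*A^*Z_j = \tilde W^*V_{j+1}\underline{H_j} = \begin{bmatrix}-Y_jh_j^*& I\end{bmatrix}\begin{bmatrix}h_j\\ \underline{H_{-j}}\end{bmatrix} = -Y_jh_j^*h_j+\underline{H_{-j}}.
\end{align*}
For the equivalent second form I would left-multiply the small-scale Lyapunov equation \eqref{eq:yeqexiuniRADrat} by $Y_j$ to obtain $\underline{H_{-j}}+Y_j\underline{H_{-j}}^*Y_j^{-1}=Y_j(S_j+h_j^*h_j)$, and rearrange to conclude that $-Y_jh_j^*h_j+\underline{H_{-j}} = Y_jS_j - Y_j\underline{H_{-j}}^*Y_j^{-1}$.

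Finally, for the second chain of equalities I would use $\tilde W^*Z_j=I$ together with the definition $S_j=Z_j^*BB^*Z_j$ to compute $\tilde W^*X_jBB^*Z_j = \tilde W^*Z_jY_jZ_j^*BB^*Z_j = Y_jS_j$. Subtracting this from the two equivalent expressions for $\tilde W^*A^*Z_j$ yields the two representations $\tilde W^*(A^*-X_jBB^*)Z_j = -Y_j\underline{H_{-j}}^*Y_j^{-1} = \underline{H_{-j}}-Y_jS_j-Y_jh_j^*h_j$. The whole argument is essentially bookkeeping: there is no real obstacle, since the three structural ingredients (the RAD block identity, the formula $\tilde W^*V_{j+1}=\begin{bmatrix}-Y_jh_j^*& I\end{bmatrix}$, and the small-scale equation \eqref{eq:yeqexiuniRADrat}) are all already available, and the only mild care needed is in left-multiplying \eqref{eq:yeqexiuniRADrat} by $Y_j$ in the right order so that $Y_j^{-1}$ cancels cleanly.
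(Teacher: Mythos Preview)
Your proposal is correct and follows essentially the same approach as the paper: both use the identity $\tilde W^*V_{j+1}=\begin{bmatrix}-Y_jh_j^*& I\end{bmatrix}$ from the proof of \Cref{thm:proj}, left-multiply the RAD to get $\tilde W^*A^*Z_j=-Y_jh_j^*h_j+\underline{H_{-j}}$, left-multiply \eqref{eq:yeqexiuniRADrat} by $Y_j$ to obtain the alternate form, and compute $\tilde W^*X_jBB^*Z_j=Y_jS_j$ to handle the second chain of equalities.
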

\begin{proof}
With $\tilde W^*C^*=-Y_jh_j^*$ as in the proof of \Cref{thm:proj}, with $\tilde W^* Z_j=I$ and by utilizing the RAD equation \eqref{eq:RADrat} we find
\begin{align}
	-Y_jh_j^*h_j + \underline{H_{-j}} &= \tilde W^{*}C^*h_j + \tilde W^* Z_j\underline{H_{-j}}\\
	&=\tilde W^{*}\begin{bmatrix}C^*& Z_j\end{bmatrix}\begin{bmatrix}h_j\\ \underline{H_{-j}}\end{bmatrix}\\
	&= W^{*}A^* \begin{bmatrix}C^*& Z_j\end{bmatrix}\begin{bmatrix}0\\I\end{bmatrix}\\
	&= \tilde W^{*}A^* Z_j.
\end{align}
Multiplication of \eqref{eq:yeqexiuniRADrat} with $Y_j$ from the left implies 
\begin{align}
-Y_jh_j^*h_j+\underline{H_{-j}}=-Y_j\underline{H_{-j}}^*Y_j^{-1} + Y_jS_j,
\end{align}
which concludes the first part of the proof.

For the second part consider $S_j=Z_j^*BB^*Z_j$ and again $\tilde W^*Z_j=I$ to obtain
	\begin{align}
		Y_jS_j &= \tilde W^*Z_jY_jS_j\\
		&= \tilde W^*Z_jY_jZ_j^*BB^*Z_j\\
		&= \tilde W^*X_jBB^*Z_j.
	\end{align}
	Multiplication of \eqref{eq:yeqexiuniRADrat} with $Y_j$ from the left yields
	\begin{align}
		0 &= \underline{H_{-j}}+ Y_j\underline{H_{-j}}^*Y_j^{-1} - Y_j(S_j + h_j^*h_j).
	\end{align}
	Thus we have
	\begin{align}
		-Y_j\underline{H_{-j}}^*Y_j^{-1} &= \underline{H_{-j}} - Y_jS_j - Y_jh_j^*h_j\\
		&= \tilde W^*A^*Z_j - \tilde W^*X_jBB^*Z_j\\
		&= \tilde W^*(A^*- X_jBB^*)Z_j
	\end{align}
	with $-Y_jh_j^*h_j + \underline{H_{-j}} = \tilde W^*A^*Z_j$ from the first part. This concludes the second part of the proof. \customqed
\end{proof}

The residual factor $R_j$ from \eqref{eq:Rj} is a linear combination of the columns of $V_{j+1}$ and so it is an element of the augmented Krylov subspace $\mathcal{K}_j^{+}(A^*,C^*,s)$, i.e. a rational function in $A^*$ multiplied with $C^*$. We aim at specifying this rational function, which turns out to be connected to the eigenvalues of the projected matrix $\tilde W^*A^* Z_j$ as stated in the next theorem.
\begin{theorem}\label{thm:Rrat}
	Let the same assumptions as in \Cref{lem:proj_sysmat} hold. Additionally let $\mathcal{K}_j^{+}(A^*,C^*,s)$ be $A^*$-variant. Let $\p{p}_j,\p{q}_j\in \Pi_j$ be normalized polynomials of degree $j$ given by
	\begin{align}
		\p{p}_j(x) &= \prod_{i=1}^j \big(x-\lambda_i^{(j)}\big) \text{ and } \p{q}_j(x) = \prod_{i=1}^j (x-s_i)
	\end{align}
	with the eigenvalues $\{\lambda_1^{(j)}, \dots, \lambda_j^{(j)}\} = \Lambda(\tilde W^*A^*Z_j)$ and the poles $s_i\in s$ of the Krylov subspace.
	Then for the residual factor \eqref{eq:Rj}
	\begin{align}
		R_j &= \p{q}_j(A^*)^{-1} \p{p}_j(A^*)C^* = \prod_{i=1}^j \frac{A^*-\lambda_i^{(j)}I_n}{A^*-s_iI_n} C^*
	\end{align}
holds.
\end{theorem}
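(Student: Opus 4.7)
The plan is to apply Theorem~\ref{thm:rootfinding} to a suitably transformed RKD whose starting vector is (proportional to) $R_j$. Since $R_j \in \mathcal{K}_j^+(A^*,C^*,s)$, we know a priori that $R_j = \gamma\,\mathfrak{q}_j(A^*)^{-1}\tilde{\mathfrak{p}}(A^*)C^*$ for some scalar $\gamma$ and some polynomial $\tilde{\mathfrak{p}}\in\Pi_j$. The task is to identify the roots of $\tilde{\mathfrak{p}}$ as the eigenvalues of $\tilde W^*A^*Z_j$ and to show $\gamma=1$. The link to $\tilde W^*A^*Z_j$ is already provided by \Cref{lem:proj_sysmat}, so the main work is choosing the right change of basis on the augmented space.

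Concretely, I would pick the regular transformation
\begin{align}
    U = \begin{bmatrix} 1 & 0 \\ Y_j h_j^* & I \end{bmatrix}, \qquad U^{-1} = \begin{bmatrix} 1 & 0 \\ -Y_j h_j^* & I \end{bmatrix},
\end{align}
and form $\breve V_{j+1} = V_{j+1}U$, $\underline{\breve K_j} = U^{-1}\underline{K_j}$, $\underline{\breve H_j} = U^{-1}\underline{H_j}$. A direct computation (using $\underline{K_j}=[0;I]$ and $\underline{H_j}=[h_j;\underline{H_{-j}}]$) gives the transformed decomposition
\begin{align}
    A^*\breve V_{j+1}\underline{\breve K_j} = \breve V_{j+1}\underline{\breve H_j}, \qquad \underline{\breve K_j}=\begin{bmatrix}0\\ I\end{bmatrix}, \qquad \underline{\breve H_j}=\begin{bmatrix} h_j \\ -Y_jh_j^*h_j + \underline{H_{-j}} \end{bmatrix},
\end{align}
with $\breve V_{j+1}=\begin{bmatrix}R_j & Z_j\end{bmatrix}$ and in particular starting vector $\breve b = R_j$. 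The lower block is exactly $\underline{\breve H_{-j}} = -Y_j h_j^*h_j + \underline{H_{-j}} = \tilde W^*A^* Z_j$ by \Cref{lem:proj_sysmat}. The subpencil $(\underline{\breve H_{-j}},\underline{\breve K_{-j}}) = (\tilde W^*A^*Z_j,I)$ is manifestly regular, so we have a generalized RKD in the sense of \Cref{def:RKD} whose poles are precisely the eigenvalues $\lambda_i^{(j)}$ of $\tilde W^*A^*Z_j$, and $\spa\{\breve V_{j+1}\} = \spa\{V_{j+1}\} = \mathcal{K}_j^+(A^*,C^*,s)$.

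Now \Cref{thm:rootfinding} applies and yields
\begin{align}
    R_j = \gamma\,\mathfrak{q}_j(A^*)^{-1}\mathfrak{p}_j(A^*)C^*
\end{align}
for some $\gamma\neq 0$, where $\mathfrak{p}_j$ has roots $\lambda_i^{(j)}$, which is the desired rational function up to a constant. To pin down $\gamma=1$, I would exploit the direct sum decomposition
\begin{align}
    \mathcal{K}_j^+(A^*,C^*,s) = \spa\{C^*\} \oplus \mathcal{K}_j(A^*,C^*,s),
\end{align}
valid because $\infty\notin s$ (assumption of \Cref{thm:exiuni}, inherited via \Cref{lem:proj_sysmat}). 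On one hand $R_j = V_{j+1}[1;\,Y_jh_j^*] = C^* + Z_jY_jh_j^*$ has component $1$ along $C^*$. On the other hand, polynomial long division of the monic polynomial $\mathfrak{p}_j$ by the monic polynomial $\mathfrak{q}_j$ gives $\mathfrak{p}_j = \mathfrak{q}_j + r$ with $\deg r \le j-1$, so $\mathfrak{q}_j(A^*)^{-1}\mathfrak{p}_j(A^*)C^* = C^* + \mathfrak{q}_j(A^*)^{-1}r(A^*)C^*$ also has component $1$ along $C^*$. Hence $\gamma=1$, which completes the proof.

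The only delicate point is the construction of the transformation $U$ so that (i) the new starting vector is exactly $R_j$ and (ii) the resulting lower Hessenberg block matches $\tilde W^*A^*Z_j$ from \Cref{lem:proj_sysmat}; once that choice is made, the rest is a direct invocation of \Cref{thm:rootfinding} together with the normalization argument above.
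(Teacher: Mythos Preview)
Your proof is correct and follows essentially the same route as the paper: the same transformation $U=\begin{bmatrix}1&0\\ Y_jh_j^*&I\end{bmatrix}$ is used to move the starting vector to $R_j$, \Cref{lem:proj_sysmat} identifies the lower block with $\tilde W^*A^*Z_j$, and \Cref{thm:rootfinding} then pins down the roots of the numerator. The only cosmetic difference is ordering: the paper first writes $R_j=C^*+\p{q}_j(A^*)^{-1}\breve{\p{p}}(A^*)C^*$ to see directly that the numerator $\p{p}_j=\p{q}_j+\breve{\p{p}}$ is monic and then determines its roots, whereas you first determine the roots and then run the same polynomial-division/direct-sum argument to fix $\gamma=1$.
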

\begin{proof}
	By construction of the RAD \eqref{eq:RADrat} $Z_j$ is a basis of the Krylov subspace $\mathcal{K}_j(A^*,C^*,s)=\mathcal{K}_j(A^*,C^*,\p{q}_j)$ and $\p{q}_j$ is normalized. Due to the definition of Krylov subspaces \eqref{eq:ratkry} there exists a polynomial $\breve{\p{p}}\in \Pi_{j-1}$ of degree at most $j-1$ with
	\begin{align}
		Z_jY_j h_j^* = \p{q}_j(A^*)^{-1}\breve{\p{p}}(A^*)C^*.
	\end{align}
	Thus for the residual factor \eqref{eq:Rj}
	\begin{align}
		R_j &= \begin{bmatrix}C^*& Z_j\end{bmatrix}\begin{bmatrix}1\\ Y_j h_j^*\end{bmatrix} = C^* + \p{q}_j(A^*)^{-1}\breve{\p{p}}(A^*)C^*\\
		&= \p{q}_j(A^*)^{-1}\big(\p{q}_j(A^*) + \breve{\p{p}}(A^*)\big)C^*
	\end{align}
	holds. Set $\p{p}_j=\p{q}_j + \breve{\p{p}}$, which is a normalized polynomial because $\p{q}_j$ is normalized and of degree $j$ and $\breve{\p{p}}$ is of degree at most $j-1$. It remains to specify the roots of $\p{p}_j$. We transform the RAD \eqref{eq:RADrat} so that it is a (generalized) RKD with starting vector $R_j$ and use \Cref{thm:rootfinding}. In order to do so consider the matrix
\begin{align}
	U = \begin{bmatrix} 1& 0\\Y_jh_j^*& I\end{bmatrix} \text{ with inverse }
	U^{-1} = \begin{bmatrix} 1& 0\\-Y_jh_j^*& I\end{bmatrix}.
\end{align}
Transformation of the RAD \eqref{eq:RADrat} with $U$ yields
\begin{align}
	\begin{bmatrix}C^*& Z_j\end{bmatrix}U = \begin{bmatrix}R_j& Z_j\end{bmatrix},
\end{align}
$\underline{K_j}$ is left unchanged and multiplication of $U^{-1}$ with $\underline{H_j}$ yields
\begin{align}
	U^{-1}\underline{H_{j}} &= U^{-1}\begin{bmatrix}h_j\\ \underline{H_{-j}}\end{bmatrix}
	= \begin{bmatrix}h_j\\ -Y_jh_j^{*}h_j + \underline{H_{-j}}\end{bmatrix}
	= \begin{bmatrix}h_j\\ \tilde W^*A^*Z_j\end{bmatrix},
\end{align}
where the last equality is due to \Cref{lem:proj_sysmat}. Now \Cref{thm:rootfinding} implies that $\{\lambda_1^{(j)}, \dots, \lambda_j^{(j)}\} = \Lambda(\tilde W^*A^*Z_j)$ are the roots of $\p{p}_j$, concluding the proof. \customqed
\end{proof}
We note that the latter result is the only one in this section for which there is no counterpart in the block case as the elements of block Krylov subspaces \eqref{eq:bkrylov} correspond to matrix polynomials, which in general can not be characterized by scalar roots.

We proceed with a theorem which connects the poles of the rational Krylov subspace and the eigenvalues of the projection of the matrix $A^*-X_jBB^*$. It is a generalization of parts of \cite[Thm. 4.4]{simoncini15} to oblique projections.
\begin{theorem}
With the same assumptions as in \Cref{lem:proj_sysmat}
	\begin{align}
		\Lambda(\tilde W^{*}(A^*-X_jBB^*)Z_j) = -\overline{s}
	\end{align}
	holds.
\end{theorem}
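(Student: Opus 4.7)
The plan is to chain together the formula from Lemma~\ref{lem:proj_sysmat} with a similarity argument and the known identification of the poles of the RAD with the shifts $s$.

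First I would invoke Lemma~\ref{lem:proj_sysmat}, which gives the explicit identity
\[
\tilde W^{*}(A^{*}-X_jBB^{*})Z_j = -Y_j\underline{H_{-j}}^{*}Y_j^{-1}.
\]
Since $Y_j$ is regular (by Theorem~\ref{thm:exiuni}, which is among the hypotheses inherited through Lemma~\ref{lem:proj_sysmat}), the right-hand side is a similarity transform of $-\underline{H_{-j}}^{*}$. Consequently
\[
\Lambda\bigl(\tilde W^{*}(A^{*}-X_jBB^{*})Z_j\bigr) = \Lambda\bigl(-\underline{H_{-j}}^{*}\bigr) = -\overline{\Lambda(\underline{H_{-j}})},
\]
using the standard fact that the spectrum of $M^{*}$ is the complex conjugate of the spectrum of $M$.

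It remains to identify $\Lambda(\underline{H_{-j}})$ with $s$. The RAD \eqref{eq:RADrat} is of the special form produced by Lemma~\ref{lem:specialRAD}, where $\underline{K_j}=\begin{bmatrix}0\\ I\end{bmatrix}$, so $\underline{K_{-j}}=I$ and the poles of the RAD (in the sense of Definition~\ref{def:RKD}) are precisely the eigenvalues of $\underline{H_{-j}}$. By Theorem~\ref{thm:RADexists}, these poles coincide with the shifts of the associated Krylov subspace $\mathcal{K}_j(A^{*},C^{*},s)$, i.e.\ $\Lambda(\underline{H_{-j}})=s$. Combining this with the display above yields $\Lambda(\tilde W^{*}(A^{*}-X_jBB^{*})Z_j) = -\overline{s}$, as claimed.

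There is no real obstacle here; the theorem is essentially a one-line consequence of Lemma~\ref{lem:proj_sysmat} once one recalls that $\underline{H_{-j}}$ carries the poles of the decomposition, which are the shifts. The only point requiring a sentence of justification is the similarity argument, which relies on the regularity of $Y_j$ guaranteed by Theorem~\ref{thm:exiuni}.
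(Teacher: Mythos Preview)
Your proof is correct and follows essentially the same route as the paper's own argument: invoke Lemma~\ref{lem:proj_sysmat} to express the projected closed-loop matrix as $-Y_j\underline{H_{-j}}^{*}Y_j^{-1}$, use similarity to pass to $-\underline{H_{-j}}^{*}$, and then identify $\Lambda(\underline{H_{-j}})=s$ via Definition~\ref{def:RKD} and Theorem~\ref{thm:RADexists}. Your write-up is in fact slightly more explicit than the paper's in justifying why $\underline{K_{-j}}=I$ makes the poles of the RAD coincide with the ordinary eigenvalues of $\underline{H_{-j}}$.
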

\begin{proof}
	Due to \Cref{def:RKD} and \Cref{thm:RADexists} the eigenvalues of $\underline{H_{-j}}$ are equal to the poles $s$ and so with \Cref{lem:proj_sysmat} and due to similarity
	\begin{align}
		\Lambda(\tilde W^*(A^*- X_jBB^*)Z_j) &= \Lambda(-Y_j\underline{H_{-j}}^*Y_j^{-1})\\
		&= \Lambda(-\underline{H_{-j}}^*)=-\overline{s}
	\end{align}
	holds, which concludes the proof. \customqed
\end{proof}

\subsection{The general case of a matrix $C^*\in \mathbb{C}^{n\times p}$}\label{sec:blockC}
So far we considered the case where $C^*$ is a vector. We now comment on the general case having a matrix $C^*\in \mathbb{C}^{n\times p}$ with an arbitrary $p\in \mathbb{N}$. We aim to construct an approximation $X_j=Z_jY_jZ_j^{*}$ with a regular $Y_j$ to the solution of the Riccati equation \eqref{def:are} with a rank-$p$ residual $\mathcal{R}(X_j)=R_jR_j^{*}$. The residual factor $R_j\in \mathbb{C}^{n\times p}$ then lies in a block rational Krylov subspace \eqref{eq:bkrylov} instead of a Krylov subspace \eqref{eq:ratkry}. To obtain a rank-$p$ residual we proceed in full analogy to the rank-1 residual by essentially replacing vectors by block vectors (i.e. matrices with $p$ columns) and scalars by scalar $p$-by-$p$ matrices (i.e. multiples of the $p$-dimensional identity matrix). We briefly comment on the changes to be made to obtain the sought-after approximation. Let a block rational Krylov subspace $\mathcal{K}^{\square}_j(A^*,C^*,s)$ be given with shifts $s=\left\{ \mu_1, \dots, \mu_j \right\} \subset \mathbb{C}$ fulfilling $s\cap -\overline s=\emptyset$. Let 
\begin{align}
	A^*V_{j+1}\underline{K_{j}} = V_{j+1}\underline{H_{j}}
\end{align}
be an associated BRAD as in \Cref{def:BRAD} with full-rank $V_{j+1}=\begin{bmatrix}C^*& Z_j\end{bmatrix}\in \mathbb{C}^{n\times jp}$ and block upper Hessenberg matrices
\begin{align}
	\underline{K_{j}} &= \begin{bmatrix}0\\ I\end{bmatrix}\in \mathbb{R}^{(j+1)p\times jp}\\
	\text{and } \underline{H_{j}} &= \begin{bmatrix}h_{j}\\ \underline{H_{-j}} \end{bmatrix}\in \mathbb{C}^{(j+1)p\times jp}, \text{ with } h_{j}\in \mathbb{C}^{p\times jp},\, \underline{H_{-j}}\in \mathbb{C}^{jp\times jp}
\end{align}
in analogy to \eqref{eq:RAD_KI}. In accordance to property \ref{it:BRADab} of \Cref{def:BRAD} the relation $H_{i+1,i}=\mu_iI_{p}$ must hold for the subdiagonal blocks of $\underline{H_{j}}$, which are the diagonal blocks of its quadratic lower submatrix $\underline{H_{-j}}$. Such a decomposition is constructed iteratively in \Cref{sec:iterexpa}. With the above BRAD \eqref{eq:VresV} holds with the block vector $v=\begin{bmatrix}I_{p}\\ 0\end{bmatrix}$ and \eqref{eq:sslyap} can be derived in full analogy. The resulting approximation $X_j=Z_jY_jZ_j^{*}$ yields a rank-$p$ residual.

\section{Two new iterative Riccati ADI methods}\label{sec:iterexpa}
Summarizing the approach from \Cref{sec:approx_riccati} the ADI approximate solution $X_j=Z_jY_jZ_j^*$ of the Riccati equation \eqref{def:are} is obtained in two steps:
First, generate a Krylov basis $Z_j$ with a corresponding RAD, then solve a small-scale Lyapunov equation for $Y_j^{-1}$. In this section these two steps are combined in an iterative way. The basis of the Krylov subspace is expanded incrementally, simultaneously the solution of the small-scale Lyapunov equation is updated.
Due to the importance of the Lyapunov equation \eqref{eq:sslyap} we give the following definition.
\begin{definition}
	For a relation
	\begin{align}\label{eq:specialRADp}
		A^*\begin{bmatrix}C^*& Z_j\end{bmatrix}\begin{bmatrix}0\\ I\end{bmatrix} = \begin{bmatrix}C^*& Z_j\end{bmatrix}\begin{bmatrix}h_{j}\\ \underline{H_{-j}} \end{bmatrix}
	\end{align}
	the \emph{associated $\tilde Y$-equation} is defined as
	\begin{align}\label{eq:yeq}
		0 &= \tilde Y\underline{H_{-j}}+ \underline{H_{-j}}^*\tilde Y - (S_j + h_j^*h_j)
	\end{align}
	with $S_j=Z_j^*BB^*Z_j$.
\end{definition}

In the remainder of this work we assume that all shifts have positive real part, i.e. $s\subset \mathbb{C}_{+}$. Hereby it is implied that the shift condition $s\cap -\overline s=\emptyset,$ $\infty\not\in s$ from \Cref{thm:exiuni} is satisfied. Further, in the $\tilde Y$-equation all eigenvalues of $\underline{H_{-j}}$ have positive real part as they are equal to the shifts and $(\underline{H_{-j}}^*,h_j^*)$ is controllable (cf. the paragraph preceding \Cref{thm:exiuni}). Due to \cite[Thm. 5.3.1 (b)]{lancaster95} the solution $\tilde Y$ is positive definite, and so is its inverse $Y_j=\tilde Y^{-1}$. Thus with a Cholesky decomposition of the small matrix $Y_j$, we find a $\hat Z_j$ spanning the same Krylov space as $Z_j$ with $X_j=Z_jY_jZ_j^*=\hat Z_j \hat Z_j^*$.

The (B)RAD \eqref{eq:specialRADp} corresponding to $Z_j$ can be altered to obtain a (B)RAD associated to $\hat Z_j$. To our advantage the corresponding $\tilde Y$-equation is then solved by the identity. The next lemma describes this procedure in detail and is the foundation of our new iterative ADI-type approaches to construct a rank-$p$ residual solution to \eqref{def:are}.
\begin{lemma}\label{lem:BRADtrafo}
	Let a BRAD of the form \eqref{eq:specialRADp} be given. Let all shifts $s\subset \mathbb{C}_{+}$ have positive real part such that the associated $\tilde Y$-equation has a positive definite solution $\tilde Y_j$. Let $G^*G = \tilde Y_j$ be a Cholesky decomposition of $\tilde Y_j$ with upper triangular $G$.
	Then the approximate solution $X_j=Z_j\tilde Y_j^{-1}Z_j^{*}$ yields a rank-$p$ residual $\mathcal{R}(X_j) = R_j R_j^*$ with residual factor $R_j = V_{j+1}\begin{bmatrix} I_p\\ \tilde Y_j^{-1}h_j^*\end{bmatrix}\in \mathbb{C}^{n\times p}$.
	This approximation can be transformed to $X_j=\hat Z_j \hat Z_j^{*}$ with $\hat Z_j=Z_jG^{-1}$.
Consider the associated transformed BRAD
	$A^* \hat V_{j+1}\begin{bmatrix}0\\ I\end{bmatrix} = \hat V_{j+1}\underline{{\hat H}_{j}}$
with $\hat V_{j+1}=\begin{bmatrix}C^*& \hat Z_j\end{bmatrix}=\begin{bmatrix}C^*& Z_j\end{bmatrix}\begin{bmatrix}I_p&0\\ 0& G^{-1}\end{bmatrix}$ and $\underline{\hat H_j}=\begin{bmatrix}I_p&0\\ 0& G\end{bmatrix}\underline{H_j}G^{-1}$. It has an associated $\tilde Y$-equation which is solved by the identity matrix, i.e.
\begin{align}\label{eq:lyapI}
	0 &= \underline{\hat H_{-j}}+ \underline{\hat H_{-j}}^* - (\hat S_j + \hat h_j^*\hat h_j)
\end{align}
holds with $\underline{\hat H_j}=\begin{bmatrix}\hat h_j\\ \underline{\hat H_{-j}}\end{bmatrix}$ and $\hat S_j=\hat Z_j^*BB^*\hat Z_j = G^{-*}S_jG^{-1}$. With these settings the residual factor can be expressed as $R_j = \hat V_{j+1}\begin{bmatrix} I_p\\ \hat h_j^*\end{bmatrix}\in \mathbb{C}^{n\times p}$.
\end{lemma}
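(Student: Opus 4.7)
The plan is to verify each of the four assertions in turn, leveraging the block-case derivation already sketched in \Cref{sec:blockC} and relying on the upper triangularity of the Cholesky factor $G$ to carry structural information through the transformation.

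For the first assertion, the rank-$p$ residual formula and the expression $R_j=V_{j+1}\begin{bmatrix}I_p\\ \tilde Y_j^{-1}h_j^*\end{bmatrix}$, I would reuse the block analogue of the computation \eqref{eq:VresV}--\eqref{eq:Rj}: the hypothesis on $V_{j+1}$ gives $v=\begin{bmatrix}I_p\\0\end{bmatrix}$ in \eqref{eq:v}, so that the inner matrix $M_j$ from \eqref{eq:rk1} has upper-left block $I_p$; positive definiteness of $\tilde Y_j$ and \eqref{eq:yeq} allow $Y_j=\tilde Y_j^{-1}$ to zero out the lower-right block, and the resulting rank-$p$ outer factorization \eqref{eq:mfac} gives the claimed $R_j$. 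For the second assertion, $\hat Z_j\hat Z_j^*=Z_jG^{-1}G^{-*}Z_j^*=Z_j(G^*G)^{-1}Z_j^*=Z_j\tilde Y_j^{-1}Z_j^*=X_j$ is a one-line computation using the Cholesky identity.

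For the third assertion, right-multiplying the original BRAD by $G^{-1}$ and inserting $\diag(I_p,G^{-1})\diag(I_p,G)=I_{(j+1)p}$ yields $A^*\hat V_{j+1}\underline{\hat K_j}=\hat V_{j+1}\underline{\hat H_j}$, with $\underline{\hat K_j}=\diag(I_p,G)\underline{K_j}G^{-1}=\begin{bmatrix}0\\I\end{bmatrix}$ and $\underline{\hat H_j}=\diag(I_p,G)\underline{H_j}G^{-1}$. The content here is showing this is still a BRAD: $\hat V_{j+1}$ retains full column rank since $G$ is regular, and because $G$ is scalar upper triangular it is also block upper triangular in $p\times p$ blocks. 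Since $\underline{H_{-j}}$ is block upper triangular with diagonal blocks $\mu_iI_p$ (this being precisely the BRAD subdiagonal structure of $\underline{H_j}$), the similarity $\underline{\hat H_{-j}}=G\underline{H_{-j}}G^{-1}$ preserves the block upper triangular form and leaves each scalar diagonal block $G_{ii}(\mu_iI_p)G_{ii}^{-1}=\mu_iI_p$ invariant. Hence $\underline{\hat H_j}$ is block upper Hessenberg with the same poles $\mu_i$ on the subdiagonal, and the transformed relation satisfies all four clauses of \Cref{def:BRAD}. This structural preservation is the main obstacle; the remaining computations are algebraic.

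For the fourth assertion, I would substitute the formulas $\underline{\hat H_{-j}}=G\underline{H_{-j}}G^{-1}$, $\hat h_j=h_jG^{-1}$, and $\hat S_j=\hat Z_j^*BB^*\hat Z_j=G^{-*}S_jG^{-1}$ into the right-hand side of \eqref{eq:lyapI}, then pre-multiply by $G^*$ and post-multiply by $G$; this reduces the equation to $\tilde Y_j\underline{H_{-j}}+\underline{H_{-j}}^*\tilde Y_j=S_j+h_j^*h_j$, which holds by the definition of $\tilde Y_j$. Finally, to recast the residual factor in the hatted basis, note that $\tilde Y_j^{-1}h_j^*=G^{-1}G^{-*}h_j^*=G^{-1}\hat h_j^*$, so that
\begin{align}
R_j=V_{j+1}\begin{bmatrix}I_p\\ G^{-1}\hat h_j^*\end{bmatrix}=V_{j+1}\begin{bmatrix}I_p&0\\0&G^{-1}\end{bmatrix}\begin{bmatrix}I_p\\ \hat h_j^*\end{bmatrix}=\hat V_{j+1}\begin{bmatrix}I_p\\ \hat h_j^*\end{bmatrix},
\end{align}
completing the proof.
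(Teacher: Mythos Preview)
Your proof is correct. The paper states this lemma without an explicit proof, treating it as a summary of the derivation in \Cref{sec:approx_riccati} together with the Cholesky-based transformation announced in the paragraph preceding the lemma; your argument supplies precisely the details the paper leaves implicit, and in particular your verification that the transformed relation remains a BRAD (via block upper triangularity of $G$ and invariance of the scalar diagonal blocks $\mu_iI_p$ under the similarity $G_{ii}(\cdot)G_{ii}^{-1}$) is a point the paper does not spell out at all.
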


In our iterative procedures after each expansion of the Krylov basis the corresponding BRAD is updated as described in \Cref{lem:BRADtrafo}. Consider a Krylov subspace $\mathcal{K}_j(A^*,C^*,s)$ with an associated BRAD
\begin{align}\label{eq:RADj}
	A^* \begin{bmatrix}C^*& Z_j\end{bmatrix} \begin{bmatrix}0\\ I\end{bmatrix} = \begin{bmatrix}C^*& Z_j\end{bmatrix}\underbrace{\begin{bmatrix}h_j\\ \underline{H_{-j}}\end{bmatrix}}_{= \underline{H_j}}
\end{align}
where the associated $\tilde Y$-equation is solved by the identity i.e. the low-rank residual approximation is given by $X_j=Z_jZ_j^*$ and the residual factor by
\begin{align}
R_j=\begin{bmatrix}C^*& Z_j\end{bmatrix} \begin{bmatrix}I_p\\ h_j^*\end{bmatrix}.
	\label{eq:RjI}
\end{align}
We expand the Krylov subspace $\mathcal{K}_j(A^*,C^*,s)$ by adding new poles, resulting in the space $\mathcal{K}_{\tilde{j}}(A^*,C^*,\tilde s)$ with $s\subset \tilde s$. The basis $\begin{bmatrix}C^*& Z_j\end{bmatrix}$ is expanded by a suitable $\tilde Z$ to obtain a basis of the expanded space. The BRAD corresponding to the expanded space $\mathcal{K}_{\tilde{j}}(A^*,C^*,\tilde s)$ thus reads
\begin{align}\label{eq:RADZ}
	A^* \begin{bmatrix}C^*& Z_j & \tilde Z\end{bmatrix} \begin{bmatrix}0\\ I\end{bmatrix} = \begin{bmatrix}C^*& Z_j & \tilde Z\end{bmatrix}\underline{\tilde H_{j+1}}.
\end{align}
The calculation of the expansion matrix $\tilde Z$ corresponding to a new pole $\mu$ is described in detail in the next two subsections. It is obtained through the solution of linear systems with the residual factor on the right hand side. By using different types of system matrices we obtain two iterative procedures: In the \emph{Riccati RAD iteration} linear systems with matrices of the form $A^*-\mu I_n$ are solved, while in the \emph{Lyapunov RADI iteration} systems are solved with matrices of the form $A^*-X_jBB^*-\mu I_n$. In \Cref{sec:prRADexp} we describe how the involved quantities can be kept real in case of real system matrices and complex shifts and how we can add multiple poles by making use of the solution of independent linear systems, which can be solved in parallel.

\subsection{The Riccati RAD iteration}\label{sec:R2ADi}
For the Riccati RAD iteration (R\textsuperscript{2}ADi) we expand the BRAD \eqref{eq:RADj} with $W_{j+1} = (A^*-\mu I_n)^{-1}R_j$. Rewriting this term we obtain equivalently
\begin{align}\label{eq:RADexp1}
	A^*W_{j+1} = R_j + \mu W_{j+1} = \begin{bmatrix}C^*& Z_j & W_{j+1}\end{bmatrix} \begin{bmatrix}I_p\\ h_j^*\\ \mu I_p\end{bmatrix}
\end{align}
with the representation of the residual factor $R_j$ as in \eqref{eq:RjI}.
Set $U_1=I_p,$ $U_2=h_j^{*}$ and $D=\mu I_p$. Then \eqref{eq:RADZ} holds with $\tilde Z = W_{j+1}$ and the partitioned matrix
\begin{align}\label{eq:extH}
	\underline{\tilde H_{j+1}} = \begin{bmatrix}h_j & U_1\\ \underline{H_{-j}} & U_2\\ 0 & D \end{bmatrix}.
\end{align}
We use the variables $U_1,$ $U_2$ with $U_2=h_j^{*}U_1$ and $D$ here to keep the derivation of our iterative procedure as general as necessary for the introduction of realification and parallelization later on. Due to $D=\mu I_p$ we find from \Cref{def:BRAD} that the BRAD \eqref{eq:RADj} is expanded with the pole $\mu$.

Our goal is to manipulate the expanded BRAD \eqref{eq:RADZ} so that the associated $\tilde Y$-equation is again solved by the identity matrix. In order to do so, first we set $\tilde h \coloneqq \begin{bmatrix}h_j& U_1\end{bmatrix},$ $\tilde H_{-} \coloneqq \begin{bmatrix}\underline{H_{-j}}& U_2\\ 0& D\end{bmatrix}$ and write the $\tilde Y$-equation associated to \eqref{eq:RADZ} as
\begin{align}\label{eq:ytlyap}
	\begin{split}
	0&=\tilde{Y}\tilde{H}_{-} + \tilde{H}_{-}^*\tilde{Y} - \begin{bmatrix}Z_j& \tilde Z\end{bmatrix}^*BB^*\begin{bmatrix}Z_j& \tilde Z\end{bmatrix} - \tilde{h}^*\tilde{h}\\
	&= \begin{bmatrix}Y_{11}\underline{H_{-j}} & Y_{11}U_2+Y_{12}D\\ Y_{12}^*\underline{H_{-j}} & Y_{12}^*U_2+Y_{22}D\end{bmatrix} + \begin{bmatrix}\underline{H_{-j}}^*Y_{11} & \underline{H_{-j}}^*Y_{12}\\ U_2^*Y_{11} + D^*Y_{12}^*&  U_2^*Y_{12}+D^*Y_{22} \end{bmatrix}\\
	&\phantom{=}- \begin{bmatrix}Z_j^*BB^*Z_j& Z_j^*BB^*\tilde Z \\ \tilde Z^* BB^*Z_j &\tilde Z^* BB^*\tilde Z\end{bmatrix} - \begin{bmatrix}h_j^*h_j& h_j^*U_1\\ U_1^*h_j& U_1^*U_1\end{bmatrix},
\end{split}
\end{align}
with the partitioned Hermitian solution matrix $\tilde Y=\begin{bmatrix} Y_{11}& Y_{12}\\ Y_{12}^*& Y_{22} \end{bmatrix}$. Now we examine the blocks of \eqref{eq:ytlyap} separately.
The upper left block of \eqref{eq:ytlyap} yields
\begin{align}
	0=Y_{11}\underline{H_{-j}} + \underline{H_{-j}}^*Y_{11} - Z_j^*BB^*Z_j - h_j^*h_j
\end{align}
with the solution $Y_{11}=I$ because this is the $\tilde Y$-equation associated to \eqref{eq:RADj} which was assumed to be solved by the identity. The lower left block is equal to the conjugated transposed of the upper right block. From the upper right block for $Y_{12}$ we obtain the Sylvester equation
\begin{align}\label{eq:Y12sylvester}
	\begin{split}
	0 &= U_2+Y_{12}D + \underline{H_{-j}}^*Y_{12} - Z_{j}^*BB^*\tilde Z - h_j^*U_1\\
	&= Y_{12}D + \underline{H_{-j}}^*Y_{12} - Z_{j}^*BB^*\tilde Z
\end{split}
\end{align}
due to $U_2=h_j^{*}U_1$. From the lower right block we find that $Y_{22}$ is the solution of the Lyapunov equation
\begin{align}\label{eq:Y22lyapunov}
	0 &= Y_{12}^*U_2 + Y_{22}D + U_2^*Y_{12} + D^*Y_{22} -\tilde Z^*BB^*\tilde Z - U_1^*U_1.
\end{align}
Thus, choosing $Y_{11}=I$ and solving \eqref{eq:Y12sylvester} for $Y_{12}$ and \eqref{eq:Y22lyapunov} for $Y_{22}$ yields the matrix $\tilde Y$ which solves \eqref{eq:ytlyap}. This $\tilde Y$ is used next to transform the BRAD \eqref{eq:RADZ} as described in \Cref{lem:BRADtrafo}. Only the newly added parts in the BRAD \eqref{eq:RADZ} are affected by this procedure, namely $\tilde Z,$ $U_1,$ $U_2,$ and $D$.

For the transformation let $G_{22}^*G_{22}=Y_{22}-Y_{12}^*Y_{12}$ be a Cholesky decomposition. Then
\begin{align}
	G = \begin{bmatrix} I & Y_{12}\\ 0& G_{22} \end{bmatrix} 
\end{align}
is the Cholesky factor of $\tilde Y=G^*G$. Now postmultiply the BRAD \eqref{eq:RADZ} by 
\begin{align}
	G^{-1} = \begin{bmatrix} I & -Y_{12}G_{22}^{-1}\\ 0& G_{22}^{-1} \end{bmatrix}
\end{align}
and rewrite the resulting equation as 
\begin{align}\label{eq:RADZjp1}
	A^* \begin{bmatrix}C^*& Z_{j+1} \end{bmatrix} \begin{bmatrix}0\\ I\end{bmatrix} = \begin{bmatrix}C^*& Z_{j+1}\end{bmatrix}\underline{H_{j+1}}
\end{align}
with
\begin{align}\label{eq:Htrafo}
	\underline{H_{j+1}} & =\begin{bmatrix}1&0\\ 0& G\end{bmatrix} \underline{\tilde H_{j+1}}G^{-1}= \begin{bmatrix}h_j & \hat U_1\\ \underline{H_{-j}} & \hat U_2\\ 0 & \hat D \end{bmatrix}
\end{align}
where
\begin{align}
	\hat U_1 &= \left(-h_jY_{12} + U_1\right )G_{22}^{-1}\\
	\hat U_2 &= \big(-\underline{H_{-j}}Y_{12} + U_2 + Y_{12}D\big)G_{22}^{-1}\\
	\hat D &= G_{22}DG_{22}^{-1}
\end{align}
and with
\begin{align}\label{eq:Ztrafo}
	Z_{j+1}&=\begin{bmatrix} Z_j & \tilde Z\end{bmatrix}G^{-1} = \begin{bmatrix} Z_j & \hat Z\end{bmatrix}
\end{align}
where $\hat Z = \left(-Z_jY_{12}+ \tilde Z \right)G_{22}^{-1}$. Due to this manipulation of the BRAD \eqref{eq:RADZ} now the $\tilde Y$-equation associated to the transformed BRAD \eqref{eq:RADZjp1} is solved by the identity matrix, i.e. \eqref{eq:lyapI} is satisfied again.
For the residual factor we find
\begin{align}
	R_{j+1} &= \begin{bmatrix}C^*& Z_{j+1}\end{bmatrix}\begin{bmatrix}I_p \\ h_{j+1}^*\end{bmatrix} = \begin{bmatrix}C^*& Z_j & \hat Z \end{bmatrix}\begin{bmatrix}I_p \\ h_j^* \\ \hat U_1^*\end{bmatrix} = R_j + \hat Z \hat U_1^*.
\end{align}
This iterative procedure is summarized in \Cref{alg:R2ADi}.

\begin{algorithm}[t]
	\caption{Riccati RAD iteration (R\textsuperscript{2}ADi)}
    \label{alg:R2ADi}
    \begin{algorithmic}[1]
    \Input System matrices $A,$ $B,$ $C,$ set of shifts $s\subset \mathbb{C}_{+}$
    \Output approximate solution $Z Z^*$, residual factor $R$
    \State initialize $Z_0=[\,],$ $R_0=C^*,$ $h_0=[\,],$ $\underline{H_{-0}}=[\,],$ $s_0=0,$ $j=0$
    \While {not converged}
    \State obtain new shift(s) $\mu$ from $s$
    \State expand RAD, obtain $\tilde Z,$ $U_1,$ $U_2,$ $D$ \Comment{solve $(A^*-\mu I_n)^{-1}R_j$}
    \State solve $Y_{12}D + \underline{H_{-j}}^*Y_{12} - s_{j}^*(B^*\tilde Z) = 0$ for $Y_{12}$\label{lin:sylvesterY12}
    \State solve $Y_{12}^*U_2 + Y_{22}D + U_2^*Y_{12} + D^*Y_{22} -\tilde Z^*BB^*\tilde Z - U_1^*U_1 = 0$ for $Y_{22}$\label{lin:lyapY22}
    \State compute $G_{22}=\chol(Y_{22}-Y_{12}^*Y_{12})$
    \State compute $[\hat U_1,\, \hat U_2,\, \hat D] = [-h_{j}Y_{12} + U_1,\, -\underline{H_{-j}}Y_{12} + U_2 + Y_{12}D,\, G_{22}D]G_{22}^{-1}$
    \State compute $\hat Z = (-Z_{j}Y_{12}+ \tilde Z)G_{22}^{-1}$ \label{lin:ZY12}
    \State update $Z_{j+1}=[Z_{j},\, \hat Z],$ $R_{j+1}=R_{j}+\hat Z\hat U_1^{*}$ \label{lin:Rupdate}
    \State update $h_{j+1}=[h_{j}, \hat U_1],$ $\underline{H_{-(j+1)}}=\begin{bmatrix}\underline{H_{-j}}& \hat U_2\\ 0 &\hat D\end{bmatrix}$
    \State update $s_{j+1} = [s_{j},\, B^*\hat Z]$
    \State $j=j+1$
    \EndWhile
    \State $Z=Z_j,$ $R=R_j$
    \end{algorithmic}
\end{algorithm}

\subsection{The Lyapunov RADI iteration}\label{sec:radi_iteration}
We now consider the expansion of the BRAD \eqref{eq:RADj} with $W_{j+1}=(A^*-X_jBB^*-\mu I_n)^{-1}R_j$. We find equivalently
\begin{align}
	A^*W_{j+1} &= R_j + X_jBB^*W_{j+1} + \mu W_{j+1}\\
	&= \begin{bmatrix}C^*& Z_j\end{bmatrix}\begin{bmatrix}I_p\\ h_j^{*}\end{bmatrix} + Z_j(Z_j^{*}BB^*W_{j+1}) + \mu W_{j+1}\\
	&= \begin{bmatrix}C^*& Z_j & \tilde Z\end{bmatrix}\begin{bmatrix}I_p\\ h_j^{*} + Z_j^{*}BB^*\tilde Z \\ \mu I_p \end{bmatrix}
\end{align}
due to the representation of the residual factor $R_j$ as in \eqref{eq:RjI} and $X_j=Z_jZ_j^{*}$. Thus \eqref{eq:RADZ} holds with $\tilde Z = W_{j+1}$ and
\begin{align}\label{eq:extH2}
	\underline{\tilde H_{j+1}} = \begin{bmatrix}h_j & U_1\\ \underline{H_{-j}} & U_2 + Z_j^{*}BB^*\tilde Z\\ 0 & D \end{bmatrix},
\end{align}
where $U_1=I_p,$ $U_2=h_j^*,$ and $D=\mu I_p$ holds. Again we use the generic variables $U_1,$ $U_2$ with $U_2=h_j^{*}U_1$ and $D$ in preparation for realification and parallelization in \Cref{sec:prRADexp}. As in the previous subsection $D=\mu I_p$ holds, so the BRAD \eqref{eq:RADj} is expanded with the pole $\mu$ here, too.

We proceed as in \Cref{sec:R2ADi} and examine each block of the partitioned $\tilde Y$-equation associated to the BRAD \eqref{eq:RADZ} in analogy to \eqref{eq:ytlyap}. Due to the additional term $Z_j^{*}BB^*\tilde Z$ in \eqref{eq:extH2} we have to replace every occurrence of $U_2$ in the equations of \Cref{sec:R2ADi} with $U_2+Z_j^{*}BB^*\tilde Z$. Thus we find that in \eqref{eq:Y12sylvester} the term $Z_j^{*}BB^*\tilde Z$ cancels out, resulting in the equation
\begin{align}\label{eq:Y12sylvester0}
	0 = Y_{12}D + \underline{H_{-j}}^{*}Y_{12}
\end{align}
which is solved by $Y_{12}=0$. By inserting $Y_{12}=0$ into \eqref{eq:Y22lyapunov} we obtain $Y_{22}$ as the solution of the Lyapunov equation
\begin{align}\label{eq:Y22RADI}
	0 &= Y_{22}D + D^*Y_{22} -\tilde Z^*BB^*\tilde Z - U_1^*U_1.
\end{align}
As at the end of \Cref{sec:R2ADi} but now with $Y_{12}=0$ we obtain
\begin{align}
	A^*\begin{bmatrix}C^*& Z_{j+1}\end{bmatrix} \begin{bmatrix}0\\ I\end{bmatrix} = \begin{bmatrix}C^*& Z_{j+1}\end{bmatrix}\underline{H_{j+1}}
\end{align}
with $Z_{j+1}=\begin{bmatrix}Z_{j} & \tilde Z G_{22}^{-1}\end{bmatrix}$ and
\begin{align}\label{eq:HtrafoRADI}
	\underline{H_{j+1}} & =\begin{bmatrix}1&0\\ 0& G\end{bmatrix} \underline{\tilde H_{j+1}}G^{-1}= \begin{bmatrix}h_j & U_1G_{22}^{-1}\\ \underline{H_{-j}} & (U_2+Z_j^{*}BB^*\tilde Z)G_{22}^{-1}\\ 0 & G_{22}DG_{22}^{-1} \end{bmatrix},
\end{align}
with $G=\diag(I,G_{22})$ and the Cholesky decomposition $G_{22}^*G_{22}=Y_{22}$. For the residual factor
\begin{align}
	R_{j+1} &= R_{j} + (\tilde Z G_{22}^{-1})(U_1G_{22}^{-1})^*\\
	&= R_{j} + \tilde Z Y_{22}^{-1} U_1^*
\end{align}
holds. We summarize this iteration in \Cref{alg:lyapRADI}. 
\begin{algorithm}[t]
	\caption{Lyapunov RADI iteration}
    \label{alg:lyapRADI}
    \begin{algorithmic}[1] 
    \Input System matrices $A,$ $B,$ $C,$ set of shifts $s\subset \mathbb{C}_{+}$
    \Output approximate solution $ZZ^*$, residual factor $R$
    \State initialize $Z_0=[\,],$ $R_0=C^*,$ $h_0=[\,],$ $\underline{H_{-0}}=[\,],$ $s_0=0,$ $K_0=0,$ $j=0$
    \While {not converged}
    \State obtain new shift(s) $\mu$ from $s$
    \State expand RAD, obtain $\tilde Z,$ $U_1,$ $U_2,$ $D$ \Comment{solve $(A^*-K_{j}B^*-\mu I_n)^{-1}R_{j}$}
    \State solve $Y_{22}D + D^*Y_{22} -\tilde Z^*BB^*\tilde Z - U_1^*U_1 = 0$ for $Y_{22}$
    \State compute $G_{22}=\chol(Y_{22})$
    \State compute $[\hat U_1,\, \hat U_2,\, \hat D] = [U_1,\, U_2+s_{j}^*(B^*\tilde Z),\, G_{22}D]G_{22}^{-1}$\label{lin:U1U2D}
    \State compute $\hat Z = \tilde ZG_{22}^{-1}$
    \State update $Z_{j+1}=[Z_{j},\, \hat Z],$ $R_{j+1}=R_{j}+\hat Z\hat U_1^{*}$\label{lin:ZR}
    \State update $h_{j+1}=[h_{j}, \hat U_1],$ $\underline{H_{-(j+1)}}=\begin{bmatrix}\underline{H_{-j}}& \hat U_2\\ 0 &\hat D\end{bmatrix}$\label{lin:hH}
    \State update $s_{j+1} = [s_{j},\, B^*\hat Z]$\label{lin:s}
    \State update $K_{j+1} = K_{j} + \hat Z (\hat Z^* B)$ \label{lin:Kupdate}
    \State $j=j+1$
    \EndWhile
    \State $Z=Z_j,$ $R=R_j$
    \end{algorithmic}
\end{algorithm}

Note that to obtain $Z$ and $R$ in \Cref{alg:lyapRADI} it is not necessary to compute and store $\underline{H_{j}}$. When $\hat U_1^*$ in line \ref{lin:ZR} is replaced by $G_{22}^{-*}U_1^{*}$, the lines \ref{lin:U1U2D}, \ref{lin:hH}, \ref{lin:s} and the variables $h_j,$ $\underline{H_{-j}},$ $s_j$, $U_2$ can be deleted. Nevertheless, having $\underline{H_{j}}$ available might be useful in some situations and only neglectable amount of memory and computational time is necessary to create it.

\begin{remark}
	In general the matrix $A^*-K_jB^*-\mu I_n$ with the feedback term $K_j=X_jB$ is a dense matrix due to the term $K_jB^*$, although $A^*$ is sparse, making system solves costly. However, as $K_jB^*$ is of rank $m$, it was proposed in \cite[Sec. 4.2]{benner18} to use the Sherman-Morrison-Woodbury (SMW) formula to speed up computations. The formula reads
	\begin{align}
		(A^*-K_jB^*-\mu I_n)^{-1}R_j &= L + N (I_m -B^*N)^{-1}B^*L \label{eq:smw1}\\
		\begin{bmatrix}L&N\end{bmatrix} &= (A^*-\mu I_n)^{-1}\begin{bmatrix}R_{j}& K_{j}\end{bmatrix}.\label{eq:smw2}
	\end{align}
	To obtain the solution of \eqref{eq:smw1}, first the sparse linear system in \eqref{eq:smw2} is solved for $L$ and $N$, then the right hand side of \eqref{eq:smw1} is used.
\end{remark}

\begin{remark}
	The procedure derived in this subsection is essentially equivalent to the RADI iteration \cite{benner18} with mainly two differences. First, instead of an approximation $X_j=Z_jY_j^{-1}Z_j^{*}$ with a (block) diagonal matrix $Y_j$ we use a Cholesky factorization of $Y_j^{-1}$ to put it into the factor $Z_j$, resulting in the approximate solution $X_j=Z_jZ_j^{*}$. Second, the shift parameters $\alpha_j$ in the RADI iteration correspond to the negative poles of our BRADs, therefore the parameters must be chosen as $\mu = -\alpha_j$ to obtain an equivalent approximation.

We further note that for $U_1=I_p$ and $D=\mu$ as above \eqref{eq:Y22RADI} simplifies to
\begin{align}
	2\Real(\mu)Y_{22} = I_p + \tilde Z^* BB^* \tilde Z,
\end{align}
which is, up to constants, equivalent to line 10 of \cite[Alg. 1]{benner18}. However, we prefer to solve the more general Lyapunov equation \eqref{eq:Y22RADI} because it is more versatile: It allows for realified and parallel RAD expansions which is described in the next subsection. 

Due to the Lyapunov equation \eqref{eq:Y22RADI} and the equivalence to the RADI iteration we chose the name Lyapunov RADI iteration for \Cref{alg:lyapRADI}. 
\end{remark}

\begin{algorithm}[t]
    \caption{Simple RAD expansion}
    \label{alg:sRADexp}
    \begin{algorithmic}[1]
	    \Input residual factor $R_j=\begin{bmatrix}C^* & Z_j\end{bmatrix}\begin{bmatrix}I_p \\ h_j^*\end{bmatrix}$, shift $\mu\in \mathbb{C}\setminus\Lambda(A^*)$
	    \Output  $\tilde Z$, $U_1,$ $U_2,$ $D$
    \State solve $W = (A^*-\mu I_n)^{-1}R_j$ or $W = (A^*-K_jB^*-\mu I_n)^{-1}R_j$
    \State $\tilde Z = W$
    \State $U_1=I_p,$ $U_2=h_j^{*},$ $D=\mu I_p$
    \end{algorithmic}
\end{algorithm}

\subsection{Parallel and realified expansion of the Krylov basis}\label{sec:prRADexp}
In the preceding subsections we have expanded the BRAD \eqref{eq:RADj} with a new pole $\mu$ and obtained the BRAD \eqref{eq:RADZ} with
\begin{align}
	\underline{\tilde H_{j+1}} = \begin{bmatrix}h_j & U_1\\ \underline{H_{-j}} & U_2\\ 0 & D \end{bmatrix} \text{ or }
	\underline{\tilde H_{j+1}} = \begin{bmatrix}h_j & U_1\\ \underline{H_{-j}} & U_2 + Z_j^{*}BB^*\tilde Z\\ 0 & D \end{bmatrix}
\end{align}
with $D=\mu I_p$ as in \eqref{eq:extH} and \eqref{eq:extH2}. Only one linear system with a shift $\mu$ was solved, which is summarized in \Cref{alg:sRADexp}.

We now describe how the BRAD can be expanded in parallel in the R\textsuperscript{2}ADi. Hereby we mean expanding the Krylov basis with $W_i = (A^*-\mu_i I_n)^{-1}R_j$ for $i=1, \dots, l$ corresponding to the pairwise distinct shifts $\mu_1, \ldots, \mu_l$. These linear systems are independent from each other and can thus be solved in parallel. Rewriting these linear systems we obtain
\begin{align}
	A^*W_i = R_j + \mu_i W_i = \begin{bmatrix}C^*& Z_j & W_i\end{bmatrix}\begin{bmatrix}I_p \\ h_j^*\\ \mu_i I_p\end{bmatrix}
\end{align}
as in \eqref{eq:RADexp1}. Thus the expanded BRAD \eqref{eq:RADZ} with $\tilde Z=\begin{bmatrix}W_1 & \cdots & W_l\end{bmatrix}$ is given by
\begin{align}\label{eq:RADj2}
	A^* \begin{bmatrix}C^*& Z_j & \tilde Z\end{bmatrix} \begin{bmatrix}0\\ I\end{bmatrix} = \begin{bmatrix}C^*& Z_j & \tilde Z\end{bmatrix}\begin{bmatrix}h_j & I_p & \cdots & I_p\\ \underline{H_{-j}} & h_j^*&\cdots&h_j^*\\ 0 & \mu_1I_p & & \\ \vdots &&\ddots& \\ 0& &&\mu_lI_p\end{bmatrix}.
\end{align}
This procedure is summarized in terms of $U_1,$ $U_2$ with $U_2=h_j^*U_1$ and $D$ in \Cref{alg:pRADexp}. Due to \Cref{def:BRAD} and the structure of $D$ we find that the BRAD is expanded with the poles $\mu_1, \ldots, \mu_l.$

\begin{algorithm}[t]
    \caption{Parallel RAD expansion}
    \label{alg:pRADexp}
    \begin{algorithmic}[1]
	    \Input residual factor $R_j=\begin{bmatrix}C^* & Z_j\end{bmatrix}\begin{bmatrix}I_p \\ h_j^*\end{bmatrix}$, pairwise distinct shifts $\mu_1, \dots, \mu_l\in \mathbb{C}\setminus\Lambda(A^*)$
	    \Output  $\tilde Z$, $U_1,$ $U_2,$ $D$
    \State solve $W_i = (A^*-\mu_i I_n)^{-1}R_j$ or $W_i = (A^*-K_jB^*-\mu_i I_n)^{-1}R_j$ for $i=1, \dots, l$
    \State $\tilde Z = \left[W_1, \dots, W_l \right]$
    \State $U_1=\left[I_p, \dots, I_p\right],$ $U_2=\left[h_j^{*}, \dots, h_j^{*}\right],$ $D=\diag(\mu_1I_p, \dots, \mu_lI_p)$
    \end{algorithmic}
\end{algorithm}

Next we consider the question of how to modify the R\textsuperscript{2}ADi such that in case of real system matrices $A,$ $B,$ $C$ and two complex conjugated shifts the iterates remain real valued and how the use of complex arithmetic is minimized; in short, we consider realification. Let $R_j$ have only real entries and let the BRAD \eqref{eq:RADj} be expanded by the two conjugated shifts $\mu,\overline\mu\in \mathbb{C}\setminus \mathbb{R},$ $\mu=a+bi,$ $a,b\in \mathbb{R}$. Set $W=(A^*-\mu I_n)^{-1}R_j$ and so $\overline{W} = (A^*-\overline\mu I_n)^{-1}R_j$. With $S=\frac{1}{2}\begin{bmatrix}1& -i\\ 1 & i\end{bmatrix}\otimes I_p$ we see $\begin{bmatrix}W&\overline{W}\end{bmatrix}S=\begin{bmatrix}\Real(W)&\Imag(W)\end{bmatrix}$ and $S^{-1}\begin{bmatrix}\mu I_p&\\ &\overline{\mu}I_p\end{bmatrix}S=\begin{bmatrix}aI_p& bI_p\\ -bI_p&aI_p\end{bmatrix}$. Parallel expansion of the Krylov basis with the complex block vectors $W$ and $\overline{W}$ yields the expanded BRAD
\begin{align}
	A^* \begin{bmatrix}C^*& Z_j & W & \overline{W}\end{bmatrix} \begin{bmatrix}0\\ I\end{bmatrix} &= \begin{bmatrix}C^*& Z_j & W&\overline{W}\end{bmatrix}\begin{bmatrix}h_j & I_p & I_p\\ \underline{H_{-j}} & h_j^*&h_j^*\\ 0 & \mu I_p &\\ 0&&\overline{\mu}I_p\end{bmatrix}.
\end{align}
Transformation of this BRAD with the matrix $\begin{bmatrix}I& 0\\ 0& S\end{bmatrix}$ then yields the equivalent realified BRAD
\begin{align}
	A^* \Big[C^*\ Z_j \ \Real(W) \ \Imag(W)\Big] \begin{bmatrix}0\\ I\end{bmatrix} &= \Big [ C^*\ Z_j \ \Real(W) \ \Imag(W)\Big] \begin{bmatrix}h_j & I_p & 0\\ \underline{H_{-j}} & h_j^*&0\\ 0 & aI_p &bI_p\\0&-bI_p&aI_p\end{bmatrix},\label{eq:RADreal}
\end{align}
where the Krylov basis is expanded with the real block vectors $\Real(W)$ and $\Imag(W)$. Note that only one (complex) system solve is necessary for the expansion with the two shifts $\mu$ and $\overline\mu$.
The realified expansion is stated in terms of $U_1,$ $U_2$ with $U_2=h_j^*U_1$ and $D$ in \Cref{alg:rRADexp}.

\begin{algorithm}[t]
    \caption{Realified RAD expansion}
    \label{alg:rRADexp}
    \begin{algorithmic}[1] 
	    \Input real residual factor $R_j=\begin{bmatrix}C^* & Z_j\end{bmatrix}\begin{bmatrix}I_p \\ h_j^*\end{bmatrix}$, shifts $a+bi=\mu, \overline\mu \in \mathbb{C}\setminus\Lambda(A^*)$ with $b\neq 0$
	    \Output  $\tilde Z$, $U_1,$ $U_2,$ $D$
    \State solve $W = (A^*-\mu I_n)^{-1}R_j$ or $W = (A^*-K_jB^*-\mu I_n)^{-1}R_j$
    \State $\tilde Z = \left[\Real(W), \Imag(W) \right]$
    \State $U_1=\left[I_p, 0\right],$ $U_2=\left[h_j^{*}, 0\right],$ $D=\begin{bmatrix}a&b\\-b&a\end{bmatrix}\otimes I_p$
    \end{algorithmic}
\end{algorithm}

\begin{remark}
	Unfortunately after the realification the relation \eqref{eq:RADreal} does not fulfill property \ref{it:BRADab} of \Cref{def:BRAD}. It therefore does not satisfy the definition of a BRAD anymore. We thus propose the term \emph{quasi BRAD} in analogy to the term quasi RAD. Further, for computational reasons it is beneficial to permute the columns of $\Real(W)$ and $\Imag(W)$ so that the Krylov basis is expanded by $\begin{bmatrix}\Real(w_1)& \Imag(w_1)& \cdots& \Real(w_p)& \Imag(w_p)\end{bmatrix}$ for $W=\begin{bmatrix}w_1& \cdots& w_p\end{bmatrix}$ as then the lower right block $\begin{bmatrix}a & b\\-b & a\end{bmatrix}\otimes I_p$ becomes $I_p \otimes \begin{bmatrix}a& b\\ -b & a\end{bmatrix}$, a block diagonal matrix with $2$-by-$2$ blocks on the diagonal. This permutation makes $\underline{H_{-j}}$ in the associated $\tilde Y$-equation a quasi upper triangular matrix so the Sylvester equation in line \ref{lin:sylvesterY12} of \Cref{alg:R2ADi} can be solved efficiently with established software packages.
\end{remark}

The parallel and realified expansion was derived here only for the Riccati RAD iteration. It can be adapted for the Lyapunov RADI iteration by simply incorporating the term $K_jB^*$ in the linear systems.

\section{Discussion}\label{sec:discussion}
As seen in \Cref{sec:approx_riccati} the two key assumptions for the existence of the Riccati ADI solution were that the solution factor $Z$ in the approximate solution is a basis of a certain (block) Krylov subspace and that the Riccati residual is of rank $p$. Unfortunately, in case of deflation or the lucky case of an exact solution, the residual is of rank smaller than $p$. To avoid these special cases the rank-$p$ condition can be replaced with the requirement that the Riccati residual factor is an element of an augmented (block) Krylov subspace which by definition contains (block) vectors with $p$ columns. However, we did not observe deflation in practice and therefore employed the rank-$p$ condition throughout this work.

We have already stated in the introduction that there are several Riccati ADI methods which have been shown to be equivalent e.g. in \cite{benbu16,benner18} by explicitly proving the equivalence of the algorithms. The uniqueness result \Cref{thm:exiuni} makes it simpler to proof the equivalence of different Riccati ADI methods. One essentially has to verify that the approximate solution lies in a rational Krylov subspace with a set of poles $s\subset \mathbb{C}$ satisfying $s\cap -\overline{s}=\emptyset$ and that the Riccati residual is of rank $p$.

\subsection{Generalized Riccati equations}\label{sec:genriccati}
We consider two generalizations of the Riccati equation \eqref{def:are}. We first describe how the generalized Riccati equation
\begin{align}
	\label{eq:areE}
	A^{*}XE + E^{*}XA + C^*C - E^{*}XBB^{*}XE = 0
\end{align}
with an additional regular system matrix $E\in \mathbb{C}^{n\times n}$ affects our iteration. This equation is not solved directly. Instead, as in \cite[Sec. 4.4]{benner18}, the equivalent Riccati equation
\begin{align}
	\label{eq:areEA}
	E^{-*}A^{*}X + XAE^{-1} + E^{-*}C^*CE^{-1} - XBB^{*}X = 0
\end{align}
is considered. It has the same structure as \eqref{def:are} where the system matrices $A$ and the initial residual factor $C^*$ are replaced by $AE^{-1}$ and $E^{-*}C^*$. It can therefore be solved with the methods described in the preceding sections. In an efficient iteration inverting $E$ is avoided by utilizing the relation
\begin{align}
	((AE^{-1})^{*}-\mu I_n)^{-1}E^{-*}R = (A^*-\mu E^{*})^{-1}R
\end{align}
with a residual factor $R$ of \eqref{eq:areE}. This requires the following modifications in the algorithms presented. All linear systems have to be shifted by a multiple of $E^*$ instead of $I_n$, i.e. the systems $(A^*-\mu E^*)^{-1}R_{j}$ respectively $(A^*-K_{j}B^*-\mu E^*)^{-1}R_{j}$ have to be solved. Further, the residual and feedback updates have to be multiplied with $E^*$ to convert these quantities corresponding to \eqref{eq:areEA} into ones corresponding to \eqref{eq:areE}. For instance, the residual update in line \ref{lin:Rupdate} of \Cref{alg:R2ADi} has to be replaced by $R_{j+1}=R_{j}+E^{*}\hat Z\hat U_1^{*}$ and the feedback update in line \ref{lin:Kupdate} of \Cref{alg:lyapRADI} must be replaced by $K_{j+1} = K_{j} + E^* \hat Z (\hat Z^* B)$. In all other algorithms these lines have to be modified alike.

Our approach can also be applied to the nonsymmetric Riccati equation
\begin{align}
	A_1^{*}X + XA_2 + C_1^*C_2 - XB_2B_1^{*}X = 0
	\label{eq:arenonsym}
\end{align}
with $A_i\in \mathbb{C}^{n_i\times n_i},$ $B_i\in \mathbb{C}^{n_i\times m},$ and $C_i\in \mathbb{C}^{p\times n_i}$ for $i=1,2$ and $X\in \mathbb{C}^{n_1\times n_2}$. It can be solved in analogy to the symmetric Riccati equation as follows. Consider the two decompositions
\begin{align}
	A_i^{*}\begin{bmatrix}C_i^*& Z_{i}\end{bmatrix}\underline{K_{i}}=\begin{bmatrix}C_{i}^*& Z_{i}\end{bmatrix}\underline{H_{i}}\text{ for } i=1,2,
\end{align}
with $\underline{K_{i}}=\begin{bmatrix}0\\I\end{bmatrix}$ and $\underline{H_{i}}=\begin{bmatrix}h_i\\\underline{H_{-i}}\end{bmatrix}$ similar to \Cref{lem:specialRAD} and with the same number of columns in $Z_1$ and $Z_2$. Then in analogy to \eqref{eq:VresV} we can rewrite the residual \eqref{eq:arenonsym} for the approximate solution $X=Z_1YZ_2^{*}$ as
\begin{align}
	\begin{bmatrix}C_1^*& Z_{1}\end{bmatrix}\left( \underline{H_1} Y \underline{K_2}^* + \underline{K_1} Y \underline{H_2}^* + \begin{bmatrix}I_p \\ 0\end{bmatrix}\begin{bmatrix}I_p & 0\end{bmatrix} - \underline{K_1} Y S Y \underline{K_2}^* \right)\begin{bmatrix}C_2^*& Z_{2}\end{bmatrix}^*
\end{align}
due to $Z_i=\begin{bmatrix}C_i^*& Z_{i}\end{bmatrix}\underline{K_{i}}$ and with $S=Z_2^*B_2B_1^*Z_1$. The associated $\tilde Y$-equation as in \eqref{eq:sslyap} which determines $\tilde Y=Y^{-1}$ so that a rank-$p$ residual is obtained has to be adapted accordingly and results in the Sylvester equation
\begin{align}\label{eq:YSylvester}
	0 &= \tilde Y\underline{H_{-1}}+ \underline{H_{-2}}^*\tilde Y - (S + h_2^*h_1)
\end{align}
and the approximate solution $X=Z_1YZ_2^{*}$ of \eqref{eq:arenonsym}. Of course also the derivation of the iterative procedure in \Cref{sec:iterexpa} can be transferred to the nonsymmetric Riccati equation. We briefly mention where such a procedure differs from \Cref{alg:R2ADi}. On the one hand, clearly two RADs have to be created instead of one, doubling the computational effort for the solution of linear systems. On the other hand, the associated $\tilde Y$-equation becomes the Sylvester equation \eqref{eq:YSylvester} and is in general solved by a non Hermitian matrix. Therefore the upper right and lower left block of the solution $\tilde Y$ of \eqref{eq:YSylvester} as in \eqref{eq:ytlyap} are not connected via conjugated transposition and two Sylvester equations like \eqref{eq:Y12sylvester} have to be solved. Further, the Lyapunov equation \eqref{eq:Y22lyapunov} becomes a Sylvester equation and yields a non Hermitian solution, such that no Cholesky decomposition exists. However, the matrix $Y$ can be decomposed in arbitrary ways, e.g. trivially into $Y=YI=IY$ resulting in $(Z_1Y)Z_2^{*}$ and $Z_1(YZ_2^{*})$. Another possible decomposition is a $LDU$ factorization with $L$ and $U$ being (block) triangular matrices with ones on the diagonal and a $D$ being a (block) diagonal matrix. For parallelization and realification no further changes are necessary as both happens in the algorithms for the RAD expansion.

\subsection{Linear matrix equations}
All results in this work immediately transfer to Lyapunov equations which are a special case of the Riccati equation \eqref{def:are} with $B=0$ and so $S_j=Z_j^*BB^*Z=0$. The R\textsuperscript{2}ADi and Lyapunov RADI iteration both simplify considerably. In the R\textsuperscript{2}ADi there is no need for the multiplication of $Y_{12}$ with the previous Krylov basis and in the Lyapunov RADI iteration there is no need for the SMW formula any more and both algorithms become equivalent.

It is notable that even for the linear Lyapunov equations the projected equation \eqref{eq:areP} respectively \eqref{eq:ssricc} is a Riccati equation, i.e. a quadratic matrix equation. This was also found in \cite[Sec. IV]{ahmad10} for large scale Sylvester equations. However, as this small scale Riccati equation is homogeneous, the equivalent small scale Lyapunov equation \eqref{eq:sslyap} can be solved for the inverse of the solution instead. The discussion in \Cref{sec:radi_iteration} implies that this solution is a (block) diagonal matrix.

Also the residual formula from \Cref{thm:Rrat} simplifies considerably. It becomes 
\begin{align}\label{eq:lyapresfac}
	R_j= \frac{\prod_{i=1}^j (A^*+\overline{s_i}I)}{\prod_{i=1}^j (A^*-s_iI)}C^*
\end{align}
as $\Lambda(\tilde W^*A^*Z_j)=\Lambda(-Y_j\underline{H_{-j}}^{*}Y_j^{-1})=\Lambda(-\underline{H_{-j}}^{*})=-\overline{s}$ holds due to \Cref{lem:proj_sysmat} and $B=0$. This means that the roots remain constant from step to step in the iteration, other than in the quadratic case.

\subsection{Shift selection}\label{sec:shifts}
For a good approximate solution the choice of the poles of the rational Krylov subspace used in the approximate solution $X_j$ is crucial. Many shift strategies exist, but their description is beyond the scope of this work. Here we only describe, in concise form and in our notation, the \emph{residual Hamiltonian shift strategy} from \cite[Sec. 4.5.1]{benner18} which we use in our numerical experiments. For a detailed discussion of this and other shift strategies we refer to \cite[Sec. 4.5]{benner18}.

The residual Hamiltonian shift strategy makes use of the eigenvalues of the Hamiltonian matrix
\begin{align}\label{eq:hamproj}
	\mathscr{H}^{\text{proj}}=\begin{bmatrix}U^*\tilde{A}U& U^*BB^*U\\ U^* R_j R_j^{*}U& -U^*\tilde{A}^*U\end{bmatrix}
\end{align}
where $\tilde{A}=A-BB^*X_j$ holds and $U$ is an orthonormal matrix which spans the same space as the last $l$ columns of $Z_j$, where $l$ is a parameter of choice. 
Let $\hat \lambda$ be an eigenvalue of $\mathscr{H}^{\text{proj}}$ with the corresponding eigenvector $\begin{bmatrix}\hat r\\ \hat q\end{bmatrix}$. The next shift is chosen as $\mu =-\hat \lambda$ where $\hat \lambda$ maximizes the expression 
$\lVert \hat q(\hat q^*\hat r)^{-1}\hat q^* \rVert$
as a heuristic for fast convergence.

\section{Numerical experiments}\label{sec:numexp}
We perform numerical experiments to compare the Riccati RAD iteration as in \Cref{alg:R2ADi} with the Lyapunov RADI iteration as in \Cref{alg:lyapRADI} and show the effects of our parallel approach. The algorithms were implemented including the modifications described in \Cref{sec:genriccati} to handle generalized Riccati equations \eqref{eq:areE} with $E$ and with realification and the possibility to use parallelization as described in \Cref{sec:prRADexp}. Our implementations of both algorithms share large parts of their code, which allows for a fair performance comparison. We compared the resulting approximate solutions of our implementations with the M-M.E.S.S.-2.0 \cite{mmess20} RADI implementation and only found differences in the order of the machine precision. Therefore we only state results for the R\textsuperscript{2}ADi and the Lyapunov RADI iteration.

\begin{figure}[t]
\def \rradirail{./plots2/7893_rail79k_R2ADi.dat}
\def \rradichip{./plots2/3024_chip0_R2ADi.dat}
\def \rradilung{./plots2/7893_lung2_R2ADi.dat}
\centering
\begin{tikzpicture}% residual norms
	\begin{semilogyaxis}[
		%title={foo}, % we use the caption of the surrounding figure as substitute for title
		ymax = 9,
		%ymin = 1e-10,
		xlabel = {\scriptsize subspace dimension},
		ylabel = {\scriptsize $\lVert R_j^* R_j \rVert_{2} / \lVert CC^*\rVert_{2}$},
		legend entries = {rail79k, chip0, lung2},
		legend pos = north east,
		width=.8\textwidth,
		]
		\addplot table[x=subspace_dimension,y=relative_residual]{\rradirail};
		\addplot table[x=subspace_dimension,y=relative_residual]{\rradichip};
		\addplot table[x=subspace_dimension,y=relative_residual]{\rradilung};
	\end{semilogyaxis}
\end{tikzpicture}%%
\caption{Relative residual norms for the rail79k, chip0 and lung2 example.}\label{fig:residual}
\end{figure}
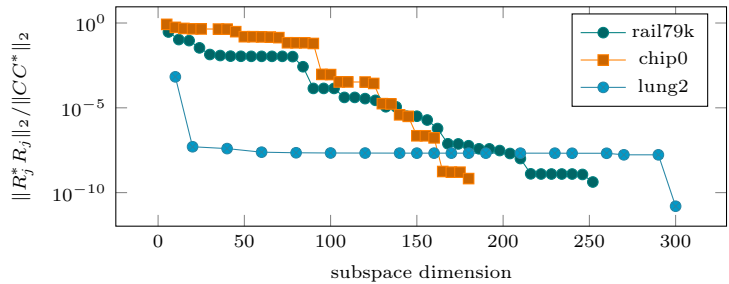

The convergence of the iteration considerably depends on the choice of shifts which determine the Krylov subspace used for the approximate solution. We used the M-M.E.S.S.-2.0 implementation of the RADI iteration with the shift strategy \emph{residual Hamiltonian shifts} as described in \Cref{sec:shifts} (denoted \emph{gen-ham-opti} in M-M.E.S.S.-2.0) and parameter $l=6p$ to precompute the shifts which were then used in our numerical experiments. The RADI iteration was stopped when the relative residual norm $\lVert R_j R_j^*\rVert_2/\lVert C^* C\rVert_2 = \lVert R_j^* R_j\rVert_2/\lVert C C^*\rVert_2$ became smaller than  $10^{-9}$. Due to the precomputed shifts the Riccati ADI solution is fixed. This allows us to employ our parallelization approach and compare the accuracy of approximate solutions obtained with different numbers of parallel threads. However, as shifts are precomputed, no timings for the shift calculation are presented, although this may contribute considerably to the iteration time. For a discussion of the effects of different shift strategies and different Riccati ADI methods we refer to the numerical experiments in \cite[Sec. 5]{benner18}.

We used the following three examples. The rail example (ID 1445, \cite{bennerSaak05}) describes the semi-discretization of a heat transfer process for optimal cooling of steel profiles. It consists of symmetric negative/positive definite $A,E$ and was used with $n=79841$, $m=7$ and $p=6$. It is denoted by \emph{rail79k}. The second example is the \emph{chip0} example (ID 1428, \cite{moosmann04}), a finite element model of a chip cooled by convection. Its system matrices have sizes $n=20082,$ $m=1$ and $p=5$. Both originate from the Oberwolfach Benchmark Collection \cite{oberwolfach05}. Further, we used the example \emph{lung2}, modeling processes in the human lung, with $n=109460,$ and $E=I$ from the UF Sparse Matrix Collection \cite{ufsparse11}. We employed this example with the negated system matrix $-A$, $m=p=10$ and $C=B^*$ chosen at random. All these examples are real valued, so all iteration steps were executed with realification in case of complex shifts.

\subsection{Comparison of R\textsuperscript{2}ADi and Lyapunov RADI iteration}
All numerical experiments in this subsection were executed using MATLAB 2019a on an Intel\textsuperscript{\textregistered} Core\texttrademark\, i7-5600U CPU @ 2.60 GHz with 12 GB RAM.

\begin{table}[b]
	\small
\begin{tabular}{lrrrrrrrr} 
\toprule
& \hspace{-1cm} subsp. &\multicolumn{4}{l}{R\textsuperscript{2}ADi}&\multicolumn{3}{l}{Lyap. RADI}\\  
\cmidrule(lr){3-6} \cmidrule(lr){7-9}
& dim. &  RAD exp.& $ZY_{12}$ mul. & misc. & total & RAD exp.& misc.& total\\ 
\midrule 
rail79k & 252 & 7.8 & 0.5 & 0.8 & \textbf{9.1} & 8.8 & 0.8 & \textbf{9.6} \\
chip0 & 180 & 32.8 & 0.1 & 0.1 & \textbf{33.0} & 34.9 & 0.1 & \textbf{35.0}\\
lung2 & 300 & 10.0 & 0.4 & 1.0 & \textbf{11.4} & 16.0 & 1.0 & \textbf{17.0}\\
\bottomrule
\end{tabular}
\caption{Times in seconds for different parts of the iterations.}\label{tab:timing}
\end{table}

\begin{figure}[t]
\centering
\def \rradirail{./plots2/7893_rail79k_R2ADi.dat}
\def \lradirail{./plots2/7893_rail79k_LRADI.dat}
\def \rradichip{./plots2/3024_chip0_R2ADi.dat}
\def \lradichip{./plots2/3024_chip0_LRADI.dat}
\def \rradilung{./plots2/7893_lung2_R2ADi.dat}
\def \lradilung{./plots2/7893_lung2_LRADI.dat}
\begin{tabular}{rl}
	\begin{tikzpicture}[baseline,trim axis left]% timings
	\begin{axis}[
		%title={foo}, % we use the caption of the surrounding figure as substitute for title
		ymax = .28,
		ymin = 0,%-.02,
		ylabel = {\scriptsize time in seconds},
		%legend entries = {RAD exp., $ZY_{12}$ mult.},
		%legend pos = north east,
		title = {R\textsuperscript{2}ADi},
		]
		\addplot table[x=iteration_no,y=expand_rad]{\rradirail};
			\label{plt:expansion}
		\addplot table[x=iteration_no,y=ZmultY12]{\rradirail};
			\label{plt:ZYmult}
		\node[anchor=north east] at (rel axis cs:1,1) {rail79k};
	\end{axis}
\end{tikzpicture}%
&
\begin{tikzpicture}[baseline,trim axis right]% timings
	\begin{axis}[
		%title={foo}, % we use the caption of the surrounding figure as substitute for title
		ymax = .28,
		ymin = 0,%-.02,
		%xlabel = {\scriptsize iteration number},
		%ylabel = {\scriptsize time in seconds},
		%legend entries = {RAD exp.},
		%legend pos = north east,
		yticklabel pos=right,
		title = {Lyapunov RADI},
		]
		\addplot table[x=iteration_no,y=expand_rad]{\lradirail};
		\node[anchor=north east] at (rel axis cs:1,1) {rail79k};
	\end{axis}
\end{tikzpicture}%
\\
\begin{tikzpicture}[baseline,trim axis left]% timings
	\begin{axis}[
		%title={foo}, % we use the caption of the surrounding figure as substitute for title
		ymax = 2.5,
		ymin = 0,%-.02,
		ylabel = {\scriptsize time in seconds},
		%legend entries = {RAD exp., $ZY_{12}$ mult.},
		%legend pos = north east,
		]
		\addplot table[x=iteration_no,y=expand_rad]{\rradichip};
		\addplot table[x=iteration_no,y=ZmultY12]{\rradichip};
		\node[anchor=north east] at (rel axis cs:1,1) {chip0};
	\end{axis}
\end{tikzpicture}%
&
\begin{tikzpicture}[baseline,trim axis right]% timings
	\begin{axis}[
		%title={foo}, % we use the caption of the surrounding figure as substitute for title
		ymax = 2.5,
		ymin = 0,%-.02,
		%xlabel = {\scriptsize iteration number},
		%ylabel = {\scriptsize time in seconds},
		%legend entries = {RAD exp.},
		%legend pos = north east,
		yticklabel pos=right,
		]
		\addplot table[x=iteration_no,y=expand_rad]{\lradichip};
		\node[anchor=north east] at (rel axis cs:1,1) {chip0};
	\end{axis}
\end{tikzpicture}%
\\
\begin{tikzpicture}[baseline,trim axis left]% timings
	\begin{axis}[
		%title={foo}, % we use the caption of the surrounding figure as substitute for title
		ymax = 1.5,
		ymin = 0,%-.02,
		xlabel = {\scriptsize iteration number},
		ylabel = {\scriptsize time in seconds},
		%legend entries = {RAD exp., $ZY_{12}$ mult.},
		%legend pos = north east,
		]
		\addplot table[x=iteration_no,y=expand_rad]{\rradilung};
		\addplot table[x=iteration_no,y=ZmultY12]{\rradilung};
		\node[anchor=north east] at (rel axis cs:1,1) {lung2};
	\end{axis}
\end{tikzpicture}%
&
\begin{tikzpicture}[baseline,trim axis right]% timings
	\begin{axis}[
		%title={foo}, % we use the caption of the surrounding figure as substitute for title
		ymax = 1.5,
		ymin = 0,%-.02,
		xlabel = {\scriptsize iteration number},
		%ylabel = {\scriptsize time in seconds},
		%legend entries = {RAD exp.},
		%legend pos = north east,
		yticklabel pos=right,
		]
		\addplot table[x=iteration_no,y=expand_rad]{\lradilung};
		\node[anchor=north east] at (rel axis cs:1,1) {lung2};
	\end{axis}
\end{tikzpicture}%
\end{tabular}%
\caption{Computational time in each iteration step of the R\textsuperscript{2}ADi and Lyapunov RADI iteration for the RAD expansion (\ref*{plt:expansion}) and the $ZY_{12}$ multiplication (\ref*{plt:ZYmult}).}\label{fig:timing}
\end{figure}
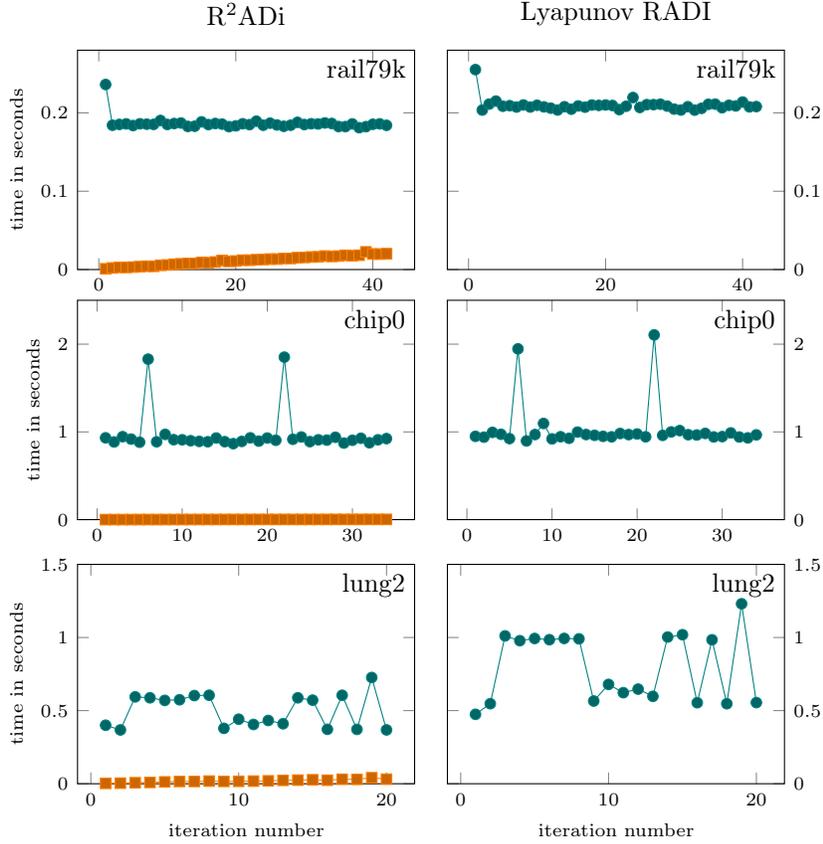

In \Cref{fig:residual} the convergence behaviour for the three examples is displayed. In \Cref{tab:timing} we display the computational time for the different parts of the two algorithms compared. The expansion of the RADs is the most expensive part due to the necessary solves of linear systems. In the R\textsuperscript{2}ADi we additionally have to multiply the current Krylov basis with the variable $Y_{12}$ in line \ref{lin:ZY12} of \Cref{alg:R2ADi}. All other parts of the iterations are aggregated under \emph{misc}.

\Cref{fig:timing} shows the cost for the RAD expansion and the $ZY_{12}$ multiplication in every step of the iterations. While the cost for solving the linear systems is almost constant during the iteration (the spikes are due to the more expensive solves with complex shifts), the $ZY_{12}$ multiplication becomes more expensive from step to step as the number of columns in the Krylov basis $Z$ grows by $p$ columns in every step. We can also observe that the Lyapunov RADI iteration needs more time for the linear solves, as due to the SMW formula equations with $m+p$ right hand sides have to be solved instead of only $p$ right hand sides in the R\textsuperscript{2}ADi. For instance in the lung2 example with $m=10$ the additional costs for the SMW formula make system solves $60\%$ more expensive than in the R\textsuperscript{2}ADi, while in the chip0 example with $m=1$ the additional costs are only little.

Due to the linearly increasing cost of the $ZY_{12}$ multiplication it is advantageous to switch from the R\textsuperscript{2}ADi to the Lyapunov RADI iteration as soon as the steps of the Lyapunov RADI iteration become cheaper than the steps of R\textsuperscript{2}ADi. However, in all examples considered here the cheaper system solves in the R\textsuperscript{2}ADi compensate for the additional time needed for the $ZY_{12}$ multiplication.

\subsection{Effect of parallelization in R\textsuperscript{2}ADi}
For the experiments in this subsection we used MATLAB 2018b on four Intel\textsuperscript{\textregistered} Xeon\textsuperscript{\textregistered}\, CPU E7-4880 v2 @ 2.50 GHz with altogether 60 CPU cores and 1~TB RAM. For the parallel expansion of the RADs the \emph{parfor} command in MATLAB was utilized. All calculations were performed with the Riccati RAD iteration \Cref{alg:R2ADi}.

\begin{figure}
\centering
\def \rradirail{./plots2/6519_rail79k_R2ADi_parallel.dat}
\def \rradichip{./plots2/6519_chip0_R2ADi_parallel.dat}
\def \rradilung{./plots2/6519_lung2_R2ADi_parallel.dat}
\begin{tikzpicture}% timings
	\begin{axis}[
		%title={foo}, % we use the caption of the surrounding figure as substitute for title
		ymax = 30,
		ymin = 0,%-.02,
		xlabel = {\scriptsize parallel threads},
		ylabel = {\scriptsize time in s},
		legend entries = {rail79k, chip0, lung2},
		legend pos = north east,
		]
		\addplot table[x=parallel_no,y=all]{\rradirail};
		\addplot table[x=parallel_no,y=all]{\rradichip};
		\addplot table[x=parallel_no,y=all]{\rradilung};
	\end{axis}
\end{tikzpicture}%
\begin{tikzpicture}% timings
	\begin{axis}[
		%title={foo}, % we use the caption of the surrounding figure as substitute for title
		ymax = 10,
		ymin = 0,%-.02,
		xlabel = {\scriptsize parallel threads},
		ylabel = {\scriptsize speedup},
		legend entries = {rail79k, chip0, lung2},
		legend pos = north west,
		]
		\addplot table[x=parallel_no,y=speedup]{\rradirail};
		\addplot table[x=parallel_no,y=speedup]{\rradichip};
		\addplot table[x=parallel_no,y=speedup]{\rradilung};
	\end{axis}
\end{tikzpicture}%%
\caption{Times and speedups for R\textsuperscript{2}ADi with parallel RAD expansion.}\label{fig:parallel_timing}
\end{figure}
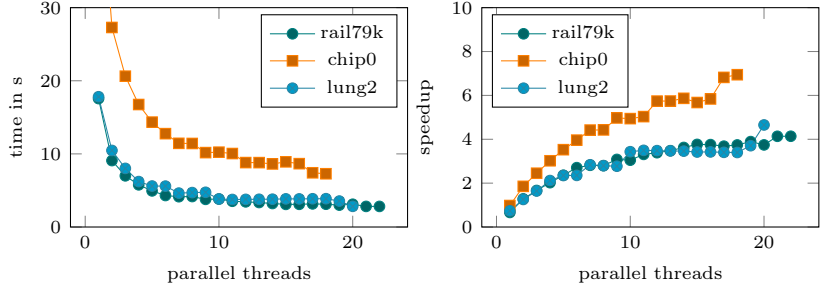
To parallelize the system solves, multiple shifts have to be available. This is the case here as all shifts were precomputed, which allows us to compare the performance and accuracy of the parallel with the serial approach. In practice the shifts are calculated one after another during the iteration as described in \Cref{sec:shifts}. An efficient shift strategy which obtains multiple shifts per iteration step has yet to be found.

In \Cref{fig:parallel_timing} the necessary times for the iteration and speedup factors are plotted against the number of parallel system solves in the RAD expansion step. The speedup factor is the iteration time for an iteration without parallelization and a \emph{for} loop divided by the iteration time needed with parallel system solves and MATLABs \emph{parfor} loop. Due to the overhead introduced by the \emph{parfor} command the serial iteration with a \emph{for} loop is faster than the same serial iteration with the \emph{parfor} loop. Thus the speedup factor for one thread is smaller than one. Further, only the system solves are executed in parallel but not the $ZY_{12}$ multiplication and miscellaneous tasks. As they take up to about $15\%$ of the calculation time in the rail79k and lung2 examples the possible maximal speedup is quite limited. Indeed we observe a moderate speedup for the rail79k and lung2 examples. For the chip0 example the speedup is higher. The factor for four parallel threads is $3.0$ and it increases to $4.4$ when eight parallel threads are used.

Besides the performance gain we also investigate the accuracy of the parallel calculations. We therefore compare the parallel iterates with the serial iterates. Let a subscript $(k)$ denote the number of parallel threads utilized to obtain the variable. We calculated the relative deviation of the residuals obtained with $k$ parallel threads from the residual obtained with the serial iteration, i.e.
\begin{align}
	\frac{\lVert \mathcal{R}_{(1)} - \mathcal{R}_{(k)} \rVert_2}{ \lVert \mathcal{R}_{(1)} \rVert_2}.
\end{align}
The second quantity we use to indicate the accuracy is the relative deviation of the parallel approximants $X_{(k)}$ from the serial approximant $X_{(1)}$
\begin{align}
	\frac{\lVert X_{(1)}-X_{(k)} \rVert_{2}}{\lVert X_{(1)} \rVert_{2}}.
\end{align}
A direct calculation of the norms of the involved matrices is infeasible due to the large dimensions. We thus exploit the factorized form of the residual and the approximants, i.e. $\mathcal{R}_{(k)}=R_{(k)}R_{(k)}^{*}$ and $X_{(k)}=Z_{(k)}Z_{(k)}^{*}$. Let $QS=\begin{bmatrix}Z_{(1)}& Z_{(k)}\end{bmatrix}$ be an economy-size QR decomposition with upper triangular $S$. Then due to the unitary invariance of the norm it holds
\begin{align}
	\lVert X_{(1)}-X_{(k)} \rVert_2 = \left\lVert S \begin{bmatrix}I&0\\0&-I\end{bmatrix}S^* \right\rVert_2
\end{align}
and only the norm of a small matrix must be computed. For the denominator we use the formula $\lVert Z_{(k)}Z_{(k)}^{*} \rVert_2=\lVert Z_{(k)}^{*}Z_{(k)}\rVert_2$. We proceed in the same way for the residual deviation.

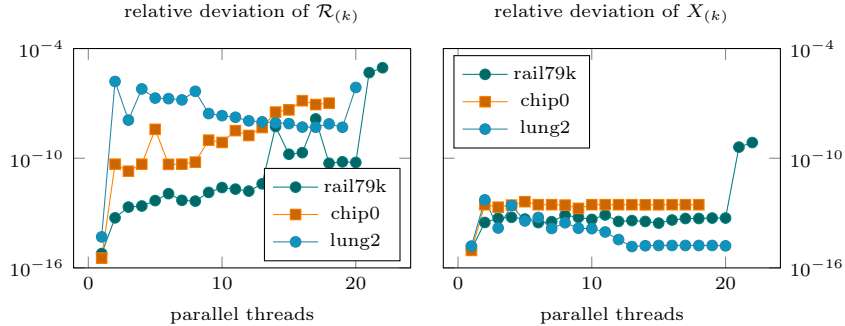
\begin{figure}
\centering
\def \rradirail{./plots2/6519_rail79k_R2ADi_parallel.dat}
\def \rradichip{./plots2/6519_chip0_R2ADi_parallel.dat}
\def \rradilung{./plots2/6519_lung2_R2ADi_parallel.dat}
\begin{tabular}{lr}
\begin{tikzpicture}% accuracy
	\begin{semilogyaxis}[
		title={\scriptsize relative deviation of $\mathcal{R}_{(k)}$},
		ymax = 1e-4,
		ymin = 1e-16,
		xlabel = {\scriptsize parallel threads},
		legend entries = {rail79k, chip0, lung2},
		legend pos = south east,
		]
		\addplot table[x=parallel_no,y=relnormR1minusRjqr]{\rradirail};
		\addplot table[x=parallel_no,y=relnormR1minusRjqr]{\rradichip};
		\addplot table[x=parallel_no,y=relnormR1minusRjqr]{\rradilung};
	\end{semilogyaxis}
\end{tikzpicture}%
&
\begin{tikzpicture}% accuracy 
	\begin{semilogyaxis}[
		title={\scriptsize relative deviation of $X_{(k)}$},
		ymax = 1e-4,
		ymin = 1e-16,
		xlabel = {\scriptsize parallel threads},
		legend entries = {rail79k, chip0, lung2},
		legend pos = north west,
		yticklabel pos=right,
		]
		\addplot table[x=parallel_no,y=relnormX1minusXjqr]{\rradirail};
		\addplot table[x=parallel_no,y=relnormX1minusXjqr]{\rradichip};
		\addplot table[x=parallel_no,y=relnormX1minusXjqr]{\rradilung};
	\end{semilogyaxis}
\end{tikzpicture}%
\end{tabular}%
\caption{Relative deviation of parallel residuals and approximants from serial results.}\label{fig:parallel_accuracy}
\end{figure}

The results are displayed in \Cref{fig:parallel_accuracy}. We observe that the relative deviations from the residual are below $10^{-5}$ for all examples. The relative deviation of the final approximation is even smaller than $10^{-12}$ for all examples and fewer than twenty parallel threads. All in all the effects of parallelization to the accuracy of the results appear to be very little. Thus parallel system solves are a reasonable technique to speed up computations.

\section{Conclusion}\label{sec:conclusion}
In this work we have introduced a new approach for the ADI-type approximation of complex large-scale algebraic Riccati equations. We made use of rational Krylov decompositions to rewrite the Riccati residual. By imposing a rank condition on this formulation of the residual we obtained a small scale Lyapunov equation, which characterizes the sought solution. It was shown that the Riccati ADI approximate solution exists and is unique under a simple condition for the shifts. Uniqueness of the solution implies the equivalence of all previously known Riccati ADI methods and our new approach. Further, we revealed that the Riccati ADI solution can be interpreted as an oblique projection onto a rational Krylov subspace if the kernel of the projection contains the Riccati residual factor. The residual factor is a rational function. The poles and zeros of this function were connected to the eigenvalues of projected system matrices.

We introduced two new iterative methods to calculate the Riccati ADI solution. Both make use of the fact that the solution of the small scale Lyapunov equation can be updated efficiently. In our first iterative method, the Riccati RAD iteration, only system solves with matrices of the form $A^*-\mu I$ and the residual factor are necessary. The second method derived, the Lyapunov RADI iteration, contains the RADI iteration as a special case. Here, system solves with a matrix of the form $A^*-KB^* -\mu I$ and the residual factor are necessary. In both algorithms the extension of the Krylov subspace was decoupled from the rest of the iteration, which made parallelization and, in case of real system matrices, realification possible easily.

The numerical experiments show the competitiveness of our new approach. They indicate that it is beneficial to start with the R\textsuperscript{2}ADi and switch to the Lyapunov RADI iteration for best performance whenever the R\textsuperscript{2}ADi becomes too expensive due to the linear increasing cost of the $ZY_{12}$ multiplication. Parallel system solves scale well with the number of parallel threads if they dominate the iteration, even though there seems to be a large overhead due to the use of MATLABs \emph{parfor} command. The accuracy of the solution obtained with parallelization is remarkably good in all numerical examples, even for as many as 20 parallel solves. However, to make the parallel approach work in practice, a shift strategy has to be found which generates multiple shifts during the iteration.

\begin{acknowledgements}
We thank Jens Saak, Patrick Kürschner and Zvonimir Bujanović for the discussions about shift strategies and for sharing their RADI implementation with us.
\end{acknowledgements}

\small
\bibliography{literatur}{}
\bibliographystyle{plain}
\end{document}